\definecolor{darkblue}{RGB}{0,0,160}
\renewcommand{\subset}{\subseteq}  
\renewcommand{\supset}{\supseteq}
\newcommand{\defas}{\mathrel{\mathop{:}}=}   
\def\cocoa{{\hbox{\rm C\kern-.13em o\kern-.07em C\kern-.13em o\kern-.15em A}}}
\newcommand{\QQ}{\mathbb{Q}}
\newcommand{\NN}{\mathbb{N}}
\newcommand{\ZZ}{\mathbb{Z}}
\newcommand{\kk}{\mathbbm{k}}
\newcommand{\<}{\langle}
\renewcommand{\>}{\rangle}
\DeclareMathOperator*{\codim}{codim} 
\DeclareMathOperator{\cone}{cone} 
\DeclareMathOperator{\charac}{char}
\DeclareMathOperator{\diag}{diag} 
\def\ol#1{{\overline {#1}}}
\newcommand{\inD}[1][\relax]{\def\argone{#1}\def\temprelax{\relax}
  \ifx\argone\temprelax\right.\else\,\middle|#1\right.{}\fi}
\newtheorem{thm}{Theorem}[section]
\newtheorem{lemma}[thm]{Lemma}
\newtheorem{prop}[thm]{Proposition}
\theoremstyle{definition}
\newtheorem{example}[thm]{Example}
\newtheorem{remark}[thm]{Remark}
\newtheorem{defn}[thm]{Definition}
\begin{document}

\title{Veronesean almost binomial almost complete intersections}
\date{August 2016}

\author{Thomas Kahle}
\address{Otto-von-Guericke Universität\\ Magdeburg, Germany}
\urladdr{\url{http://www.thomas-kahle.de}}
\author{Andr\'e Wagner}
\address{Technische Universität Berlin\\ Berlin, Germany}
\urladdr{\url{http://page.math.tu-berlin.de/~wagner}}

\subjclass[2010]{Primary: 05E40; Secondary: 13A02, 13H10, 14M25, 52B20}

%

\begin{abstract}
  The second Veronese ideal $I_n$ contains a natural complete
  intersection $J_n$ generated by the principal $2$-minors of a
  symmetric $(n\times n)$-matrix.  We determine subintersections of
  the primary decomposition of $J_n$ where one intersectand is
  omitted.  If $I_n$ is omitted, the result is the other end of a
  complete intersection link as in liaison theory.  These
  subintersections also yield interesting insights into binomial
  ideals and multigraded algebra.  For example, if $n$ is even, $I_n$
  is a Gorenstein ideal and the intersection of the remaining primary
  components of $J_n$ equals $J_n+\<f\>$ for an explicit polynomial
  $f$ constructed from the fibers of the Veronese grading map.
\end{abstract}

\maketitle

\section{Introduction}

Ideals generated by minors of matrices are a mainstay of commutative
algebra.  Here we are concerned with ideals generated by $2$-minors of
symmetric matrices.  Ideals generated by arbitrary minors of symmetric
matrices have been studied by Kutz~\cite{kutz1974cohen} who proved, in
the context of invariant theory, that the quotient rings are
Cohen--Macaulay.  Results of Goto show that the quotient ring is
normal with divisor class group $\ZZ_2$ and Gorenstein if the format
of the symmetric matrix has the same parity as the size of the
minors~\cite{goto1977divisor,goto1979gorensteinness}.  Conca extended
these results to more general sets of minors of symmetric
matrices~\cite{conca1994divisor} and determined Gröbner bases and
multiplicity~\cite{conca1994grobner}.

Here we are concerned only with the binomial ideal $I_n$ generated by
the $2$-minors of a symmetric $(n\times n)$-matrix.  This ideal cuts
out the second Veronese variety and was studied classically, for
example by Gröbner~\cite{groebner65:veronese}.  It contains a complete
intersection $J_n$ generated by the principal $2$-minors
(Definition~\ref{d:JnIdeal}).  Coming from liaison theory one may ask
for the ideal $K_n = J_n : I_n$ on the other side of the complete
intersection link via~$J_n$.  In this paper we determine~$K_n$.

\begin{example}\label{e:n3}
  Consider the ideal
  $J_3 = \<ad-b^2, af-c^2, df-e^2\> \subset \QQ[a,b,c,d,e,f]$
  generated by the principal $2$-minors of the symmetric matrix
  $\left(\begin{smallmatrix}
      a & b & c \\
      b & d & e \\
      c & e & f \\
  \end{smallmatrix}\right)$.
  It is easy to check, for example with \textsc{Macaulay2}~\cite{M2},
  that $J_3$ is a complete intersection and has a prime decomposition
  $J_3 = I_3\cap K_3$ where
  \[
    I_3 = J_3 + \<ae-bc, cd-be, ce-bf\>
  \]
  is the second Veronese ideal, generated by all $2$-minors, and
  \[
    K_3 = J_3 + \<ae+bc, cd+be, ce+bf\>
  \]
  is the image of $I_3$ under the automorphism of $\QQ[a,\dots,f]$
  that maps $b,c$, and $e$ to their negatives and the remaining
  indeterminates to themselves.  As a very special case of
  Theorem~\ref{thm:mysterious} we find that the generator $ae+bc$ is
  the generating function of the fiber
  \[
    \left\{u \in \NN^6 :
    \begin{pmatrix}
      2 & 1 & 1 & 0 & 0 & 0 \\
      0 & 1 & 0 & 2 & 1 & 0 \\
      0 & 0 & 1 & 0 & 1 & 2
    \end{pmatrix}\cdot u =
    \begin{pmatrix}
      2\\
      1\\
      1
    \end{pmatrix}
  \right\}
  \]
  of the $\ZZ$-linear map~$V_3$ that defines the fine grading of
  $\QQ[a,\dots,f]/I_3$.  For $n\ge 4$ the extra generators are not
  binomials anymore and $K_n$ is an intersection of ideals obtained
  from $I_n$ by twisting automorphisms (Definition~\ref{d:twisting}).
  In Example~\ref{e:n4}, for $n=4$, we find $K_4 = J_4 + \<p\>$ for
  one quartic polynomial $p$ with eight terms.
\end{example}

General results on liaison theory of ideals of minors of symmetric
matrices have been obtained by
Gorla~\cite{gorla2007g,gorla2010symmetric}.  Our methods rely on the
combinatorics of binomial ideals and since $K_n$ is not binomial we
cannot explore the linkage class more with the present method.
Instead we are motivated by general questions about binomial ideals
and their intersections.  For example,
\cite[Problem~17.1]{kahle11mesoprimary} asks when the intersection of
binomial ideals is binomial.  From the primary (in fact, prime)
decomposition of $J_n$ we remove $I_n$ and intersect the remaining
binomial prime ideals.  The result is not binomial.  If $n$ is even,
$K_n = J_n + \<p\>$ for one additional polynomial~$p$.  In the
terminology of~\cite{Berger1963}, $K_n$ is thus an \emph{almost
complete intersection}.  It is also \emph{almost binomial}, as it is
principal modulo its binomial
part~\cite[Definition~2.1]{kahle16:findingBin}.  If $n$ is odd, then
there are $n$ additional polynomials (Theorem~\ref{thm:mysterious}).
While these numbers can be predicted from general liaison theory, our
explicit formulas reveal interesting structures at the boundary of
binomiality and are thus a first step
towards~\cite[Problem~17.1]{kahle11mesoprimary}

We determine $K_n$ with methods from combinatorial commutative
algebra, multigradings in particular~(see \cite[Chapters~7
and~8]{miller05:_combin_commut_algeb}).  The principal observation
that drives the proofs in Section~\ref{sec:principal-ideal} is that
the Veronese-graded Hilbert function of the quotient
$\kk[\mathbf{x}]/J_n$ becomes eventually constant
(Remark~\ref{r:bigEnough}).  The eventual value of the Hilbert
function bounds the number of terms that a graded polynomial can have.
The extra generators of $K_n$ are the lowest degree polynomials that
realize the bound.  We envision that this structure could be explored
independently and brought to unification with the theory of toral
modules from~\cite{dickenstein08:_combin_binom_primar_decom}.  Our
results also have possible extensions to higher Veronese ideals as we
outline in Section~\ref{s:extensions}.

Denote by $c_{n} \defas \binom{n}{2}$ the entries of the second
diagonal in Pascal's triangle.  Throughout, let
$[n] := \{1,\dots, n\}$ be the set of the first $n$ integers.  The
second Veronese ideal lives in the polynomial ring
$\kk[\NN^{c_{n+1}}]$ in $c_{n+1}$ indeterminates over a field~$\kk$.
For polynomial rings and quotients modulo binomial ideals we use
monoid algebra notation (see, for instance,
\cite[Definition~2.15]{kahle11mesoprimary}).  We make no a-priori
assumptions on $\kk$ regarding its characteristic or algebraic
closure, although care is necessary in characteristic two.  The
variables of $\kk[\NN^{c_{n+1}}]$ are denoted $x_{ij}$, for
$i,j\in [n]$ with the implicit convention that $x_{ij} = x_{ji}$.  For
brevity we avoid a comma between $i$ and $j$.  We usually think about
upper triangular matrices, that is $i\le j$.  The Veronese ideal
$I_{n}$ is the toric ideal of the \emph{Veronese multigrading
$\NN V_n$}, defined by the $(n\times c_{n+1})$-matrix $V_n$ with
entries
\begin{equation*}
  (V_n)_{i,jk} \defas
  \begin{cases} 
    2 & \text{ if $i=j=k$},\\
    1 & \text{ if $i=j$, or $i=k$, but not both},\\
    0 & \text{ otherwise}.
  \end{cases}
\end{equation*}
That is, the columns of $V_n$ are the non-negative integer vectors of
length $n$ and weight two.  For $\mathbf{b}\in\NN V_n$, the
\emph{fiber} is
$V_n^{-1}[\mathbf{b}] = \{u \in \NN^{c_{n+1}}: V_n u = \mathbf{b}\}$.
Computing the $V_n$-degree of a monomial is easy: just count how often
each row or column index appears in the monomial.  For example,
$\deg(x_{12}x_{nn}) = (1,1,0,\dots,0,2)$.  We do not distinguish row
and column vectors notationally, in particular we write columns as
rows when convenient.  Gröbner bases for a large class of toric ideals
including $I_n$ have been determined by
Sturmfels~\cite[Theorem~14.2]{sturmfels96:_gr_obner_bases_and_convex_polyt}.
The \emph{Veronese lattice $L_{n} \subset \ZZ^{c_{n+1}}$} is the
kernel of~$V_{n}$.  The rank of $L_n$ is $c_{n}$ since the rank of
$V_n$ is $n$ and $c_{n+1} - n = c_{n}$.  Lemma~\ref{l:veroLat} gives a
lattice basis.  With $\{e_{ij}, i\le j \in [n]\}$ a standard basis of
$\ZZ^{c_{n+1}}$, we use the following notation
\[
  [ij|kl] := e_{ik} + e_{jl} - e_{il} - e_{jk} \in \ZZ^{c_{n+1}}.
\] 
Then $[ij|kl]$ is the exponent vector of the minor
$x_{ik}x_{jl} - x_{il}x_{jk}$.

\subsection{Acknowledgement}
\label{sec:acknowledgement}
The authors are grateful to Aldo Conca for posing the question of
determining the link of the Veronese variety through its complete
intersection of principal minors and valuable comments on an early
version of the manuscript.  During the initial stages of this work,
Thomas Kahle was supported by the research focus dynamical systems of
the state Saxony-Anhalt. Andr\'e Wagner's research is carried out in
the framework of {\sc Matheon} supported by Einstein Foundation
Berlin.

\section{Decomposing and Recomposing}
\label{sec:principal-ideal}

\begin{lemma}\label{l:veroLat} The set 
  \[ 
    \mathcal B=\{[in|jn] : i,j \in [n-1]\}
  \] 
  is a lattice basis of the Veronese lattice~$L_n$.
\end{lemma}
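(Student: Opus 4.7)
The plan is to reduce the statement to linear algebra by exploiting an asymmetry in $\mathcal{B}$: every element $[in|jn] = e_{ij} + e_{nn} - e_{in} - e_{jn}$ has exactly one nonzero coefficient supported in the ``interior block'' $\{e_{ij} : i \le j \le n-1\}$, namely $+e_{ij}$ (with the convention $e_{ij} = e_{ji}$), while its remaining nonzero coefficients all lie in the ``last-row block'' $\{e_{in} : i \in [n]\}$. This suggests working with the projection $\pi \colon \ZZ^{c_{n+1}} \to \ZZ^{c_n}$ that forgets the last-row block and keeps only the interior coordinates.

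First I would verify $\mathcal{B} \subset L_n$ by applying $V_n$ to $[in|jn]$ and checking that the result is zero; this is nothing more than the observation that the $2$-minor $x_{ij}x_{nn} - x_{in}x_{jn}$ of the generic symmetric matrix is a binomial in the Veronese ideal. At the same time, the immediate computation $\pi([in|jn]) = e_{ij}$ shows that $\pi$ sends $\mathcal{B}$ bijectively to the standard basis of $\ZZ^{c_n}$. In particular, $|\mathcal{B}| = c_n$ and $\mathcal{B}$ is $\ZZ$-linearly independent in $\ZZ^{c_{n+1}}$.

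The main, and essentially only, step is to show that $\pi|_{L_n}$ is injective. Any $w \in L_n$ with $\pi(w) = 0$ has the form $w = \sum_{i=1}^{n-1} w_i e_{in} + w_n e_{nn}$. Applying $V_n$ and reading off coordinates in $\ZZ^n$ yields $w_i$ in row $i$ for each $i < n$ and $\sum_{i<n} w_i + 2 w_n$ in row $n$; forcing this vector to vanish gives $w = 0$. This is the only place where the precise combinatorial shape of $V_n$ (in particular the factor of $2$ on the diagonal columns) is actually used, and it is the main, although very mild, obstacle.

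Combining the two steps, $\pi|_{L_n}$ is an injective $\ZZ$-module map whose image contains the standard basis of $\ZZ^{c_n}$ (because $\pi(\mathcal{B}) \subset \pi(L_n)$), hence is an isomorphism $L_n \xrightarrow{\sim} \ZZ^{c_n}$ sending $\mathcal{B}$ to a $\ZZ$-basis. Therefore $\mathcal{B}$ is a $\ZZ$-basis of $L_n$; note that this argument never needs the rank $c_n$ as an independent input, since the isomorphism delivers it automatically.
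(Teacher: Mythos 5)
Your proof is correct. It uses the same coordinate projection $\pi$ that the paper's proof uses (forgetting the coordinates indexed by $(i,n)$, $i\in[n]$), and the same starting observation that $\pi$ carries $\mathcal{B}$ bijectively onto the standard basis of $\ZZ^{c_n}$, but it then diverges in a worthwhile way. The paper combines that observation with the Smith normal form to conclude that $\mathrm{span}_\ZZ(\mathcal{B})$ is a \emph{saturated} sublattice of rank $c_n$, and then quotes the rank of $L_n$ (computed just before the lemma, $c_{n+1}-\rk V_n = c_n$) to force $\mathrm{span}_\ZZ(\mathcal{B}) = L_n$. You instead prove directly that $\pi|_{L_n}$ is injective: an element of $L_n$ in the kernel of $\pi$ is supported on the coordinates $e_{in}$, and applying $V_n$ — using that column $(i,n)$ contributes a $1$ in row $i$ for $i<n$ and column $(n,n)$ contributes a $2$ in row $n$ — immediately forces it to vanish. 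Since the image of $\pi|_{L_n}$ contains the standard basis, $\pi|_{L_n}$ is then an isomorphism onto $\ZZ^{c_n}$ whose inverse sends the standard basis to $\mathcal{B}$. Your route sidesteps the Smith normal form and the saturation bookkeeping and does not need the rank of $L_n$ as an external input (it falls out of the isomorphism); the trade-off is the extra explicit computation with $V_n$, which the paper's argument never touches. Both are correct and short; yours is arguably the more self-contained of the two.
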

\begin{proof} Write the elements of $\mathcal{B}$ as the columns of a
  $(c_{n+1}\times c_n)$-matrix~$B$.  Deleting the rows corresponding
  to indices $(i,n)$ for $i\in [n]$ yields the identity matrix
  $I_{c_{n}}$.  Thus $\mathcal{B}$ spans a lattice of the correct rank
  and that lattice is saturated.  Indeed, the Smith normal form of $B$
  must equal the identity matrix $I_{c_{n}}$ concatenated with a zero
  matrix.  Thus the quotient by the lattice spanned by $\mathcal{B}$
  is free.
\end{proof}

The Veronese ideal contains a codimension~$c_{n}$ complete
intersection $J_{n}$ generated by the principal $2$-minors.
\begin{defn}\label{d:JnIdeal} 
  The \emph{principal minor ideal} $J_n$ is generated by all principal
  $2$-minors $x_{ii}x_{jj} - x_{ij}^2$ of a generic symmetric matrix.
  The \emph{principal minor lattice} $L'_n$ is the lattice generated
  by the corresponding exponent vectors $[ij|ij]$, $i,j\in [n]$.
\end{defn}

It can be seen that the principal minor lattice is minimally generated
by $[ij|ij]$.  It is an unsaturated lattice meaning that it cannot be
written as the kernel of an integer matrix, or equivalently, that the
quotient $\ZZ^{c_{n+1}}/L_n'$ has torsion.  Since there are no
non-trivial coefficients on the binomials in $J_n$,
Proposition~\ref{p:isMesoprime} below says that it is a lattice ideal
with lattice~$L'_n$.  The twisted group algebra in
\cite[Definition~10.4]{kahle11mesoprimary} is just a group algebra.
Its torsion subgroup is given in the following proposition.

\begin{prop}\label{p:torsion} 
  The principal minor lattice is minimally generated by
  \[ 
    \mathcal B'=\{2[in|jn] : i\neq j \in [n-1]\} \cup \{[in|in] : i \in [n-1] \}.
  \]
  Furthermore the group $L_n/L_n'$ is (isomorphic to)
  $(\ZZ/2\ZZ)^{c_{n-1}}$.
\end{prop}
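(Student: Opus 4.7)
The plan is to use Lemma~\ref{l:veroLat}, which provides the basis $\mathcal B = \{[in|jn] : i \le j \in [n-1]\}$ of $L_n$. Split $\mathcal B$ into the $n-1$ ``diagonal'' vectors $[in|in]$ and the $c_{n-1}$ ``off-diagonal'' vectors $[in|jn]$ with $i<j$. The proposed set $\mathcal B'$ substitutes $2[in|jn]$ for each off-diagonal basis vector and keeps the diagonal ones, and in particular has cardinality $(n-1)+c_{n-1}=c_n=\rk L_n$.

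The engine of the proof is a single algebraic identity, which I would verify first by expanding in the standard basis:
\[
  2[in|jn] \;=\; [in|in] + [jn|jn] - [ij|ij], \qquad i\neq j \in [n-1].
\]
Reading this identity in one direction exhibits every $2[in|jn]$ as a $\ZZ$-combination of principal $2$-minor exponents, so $\mathcal B' \subset L_n'$. Reading it in the other direction gives $[ij|ij] = [in|in]+[jn|jn]-2[in|jn]$ for $i,j<n$, realising those principal exponents in the $\ZZ$-span of $\mathcal B'$; the remaining generators $[in|in]$ of $L_n'$ are in $\mathcal B'$ by definition. Thus $\mathcal B'$ generates $L_n'$.

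The last step combines both halves. Expressed in the basis $\mathcal B$, the set $\mathcal B'$ is the image of $\mathcal B$ under the diagonal change-of-basis matrix with $n-1$ entries equal to $1$ and $c_{n-1}$ entries equal to $2$. In particular $\mathcal B'$ is $\ZZ$-linearly independent, so together with the previous paragraph it is a $\ZZ$-basis of $L_n'$; any $\ZZ$-basis of a free abelian group is a minimal generating set, giving the first assertion. The same diagonal matrix is the Smith normal form of the inclusion $L_n' \hookrightarrow L_n$ with respect to the chosen bases, so
\[
  L_n/L_n' \;\cong\; \ZZ^{n-1}/\ZZ^{n-1} \,\oplus\, \ZZ^{c_{n-1}}/2\ZZ^{c_{n-1}} \;\cong\; (\ZZ/2\ZZ)^{c_{n-1}},
\]
which is the second assertion.

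The only real obstacle is spotting the identity $2[in|jn] = [in|in]+[jn|jn]-[ij|ij]$; once it is in place, both the generating-set statement and the quotient computation reduce to the already-established structure of the basis $\mathcal B$ from Lemma~\ref{l:veroLat}.
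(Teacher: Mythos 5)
Your proof is correct and rests on the same key identity $2[in|jn] = [in|in] + [jn|jn] - [ij|ij]$ that drives the paper's argument. The one place you diverge is worth noting: the paper computes the Smith normal form of the $(c_{n+1}\times c_n)$-matrix whose columns are $\mathcal B'$ (giving a presentation of $\ZZ^{c_{n+1}}/L_n'$ and requiring a short argument to clear the rows below the diagonal block), whereas you compute the Smith normal form of the $(c_n\times c_n)$ transition matrix of the inclusion $L_n' \hookrightarrow L_n$ relative to the bases $\mathcal B'$ and $\mathcal B$, which is already diagonal and yields $L_n/L_n'$ in one step. This is a minor streamlining rather than a genuinely different route, but it does make the quotient computation immediate.
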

\begin{proof}
  It holds that $2[in|jn]=[in|in]+[jn|jn]-[ij|ij]$ and the map which
  includes the span of the elements $[ij|ij]$ into $L'_n$ is
  unimodular.
  A presentation of the group can be read off the Smith normal form of
  the matrix whose columns are a lattice basis.  Since $\mathcal{B}'$
  is a basis of $L'_n$, the columns and rows can be arranged so that
  the diagonal matrix $\diag(2,\dots,2,1,\dots,1)$ with $c_{n-1}$
  entries $2$ is the top $(c_n \times c_n)$-matrix of the Smith normal
  form.  Any entry below a two is divisible by two and thus row
  operations can be used to zero out the the bottom part of the
  matrix.  This yields the Smith normal form.
\end{proof}

The difference between the basis in Definition~\ref{d:JnIdeal} and
$\mathcal B'$ is that transition matrix from $\mathcal B$ to
$\mathcal B'$ is diagonal. Thus it is easy to describe the fundamental
zonotope of~$\mathcal B'$.

If $\charac(\kk) = 2$, then $J_n$ is primary over~$I_n$.  In all other
characteristics one can see that the Veronese ideal $I_{n}$ is a
minimal prime and in fact a primary component of~$J_{n}$.  These
statements follow from \cite{eisenbud96:_binom_ideal} and are
summarized in Proposition~\ref{p:decompose} below.  Towards this
observation, the next proposition says that $J_n$ is a mesoprime
ideal.

\begin{prop}\label{p:isMesoprime} 
  $J_{n}$ is a mesoprime binomial ideal and its associated lattice
  is~$L'_n$.
\end{prop}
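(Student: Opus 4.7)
The plan is to identify $J_n$ with the lattice ideal $I_{L'_n}$ attached to the trivial partial character; by \cite[Definition~10.4]{kahle11mesoprimary} this is precisely the assertion of the proposition. Since the generators $x_{ii}x_{jj}-x_{ij}^2$ are pure-difference binomials whose exponents $[ij|ij]$ form the lattice basis of $L'_n$ from Definition~\ref{d:JnIdeal}, we tautologically have $J_n\subset I_{L'_n}$. By the Eisenbud--Sturmfels criterion~\cite{eisenbud96:_binom_ideal}, a pure-difference binomial ideal is a lattice ideal if and only if it is saturated with respect to every variable, so the essential step is to show that every $x_{ab}$ is a non-zero-divisor modulo $J_n$.

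First I would verify that $J_n$ is a complete intersection of codimension $c_n$. The relations $x_{ij}^2\equiv x_{ii}x_{jj}\pmod{J_n}$ make each $x_{ij}$ integral over the subring $\kk[x_{11},\dots,x_{nn}]$, so $\kk[\mathbf{x}]/J_n$ is finite over an $n$-dimensional polynomial ring, giving $\dim\kk[\mathbf{x}]/J_n=n$ and $\codim J_n=c_{n+1}-n=c_n$. Matching the number of generators, this shows $J_n$ is a complete intersection; hence $\kk[\mathbf{x}]/J_n$ is Cohen--Macaulay and $J_n$ is unmixed, so it remains to rule out variables in the minimal primes of~$J_n$.

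Assume for contradiction that $x_{ab}\in P$ for some minimal prime $P\supset J_n$. If $a<b$, then $x_{aa}x_{bb}\equiv x_{ab}^2\pmod{J_n}$ lies in $P$, so by primality $x_{aa}\in P$ or $x_{bb}\in P$; we therefore reduce to the diagonal case $a=b$. For each $c\in [n]\setminus\{a\}$ the generator $x_{aa}x_{cc}-x_{ac}^2\in P$ together with $x_{aa}\in P$ forces $x_{ac}\in P$, so $P$ contains all $n$ variables of the $a$-th row of the symmetric matrix. Modding these out, the image of $J_n$ in the remaining $c_n$ variables is the principal-minor ideal $J_{n-1}$, of codimension $c_{n-1}$ by the same integrality argument. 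Therefore $\codim P\ge n+c_{n-1}=c_n+1$, contradicting the minimality of~$P$.

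Consequently every variable is a non-zero-divisor modulo $J_n$, and the Eisenbud--Sturmfels criterion yields $J_n=I_L$ for the associated lattice $L=\{u\in\ZZ^{c_{n+1}} : x^{u^+}-x^{u^-}\in J_n\}$. The inclusion $L'_n\subset L$ is immediate from the generators of $J_n$ lying in $I_L$, while $J_n\subset I_{L'_n}$ together with the standard identification of the associated lattice of $I_{L'_n}$ with $L'_n$ yields $L\subset L'_n$. Thus $L=L'_n$. The hardest step is the codimension bookkeeping in the minimal-prime argument, which leverages the identity $c_n-c_{n-1}=n-1$ after iterating the generators to propagate a single variable into a whole row of the symmetric matrix.
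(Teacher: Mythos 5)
Your proposal is correct, and it takes a genuinely different route from the paper's own proof. The paper argues combinatorially: invoking the Diaconis--Sturmfels correspondence between non-negative lattice walks and binomial ideals, it reduces to showing that any two fiber elements $u^+,u^-$ with $u^+-u^-\in L'_n$ can be connected by the moves $[ij|ij]$ without leaving $\NN^{c_{n+1}}$; after reducing to disjoint supports it observes that all off-diagonal entries are even and can be pushed to zero, and that $L'_n$ contains no diagonal matrices. You instead argue algebraically: the integrality of each $x_{ij}$ over $\kk[x_{11},\dots,x_{nn}]$ via $x_{ij}^2\equiv x_{ii}x_{jj}$ gives $\dim\kk[\mathbf x]/J_n=n$, hence $J_n$ is a complete intersection of codimension $c_n$; Krull's height theorem then forces every minimal prime to have codimension exactly $c_n$, and the propagation ``diagonal $x_{aa}\in P$ $\Rightarrow$ the whole $a$-th row $\in P$'' yields an ideal with the codimension of $J_{n-1}$ after modding out $n$ variables, so $\codim P\ge n+c_{n-1}=c_n+1$, a contradiction; saturation with respect to all variables plus the Eisenbud--Sturmfels criterion (their Corollary~2.5 in the pure-difference case) then identifies $J_n$ with the lattice ideal $I_{L'_n}$. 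The paper's route is more elementary and simultaneously verifies the Markov-basis property of the principal-minor moves, which is of independent combinatorial interest; your route leans on dimension theory and unmixedness, and as a by-product recovers the complete-intersection statement that the paper only asserts, at the cost of importing a nontrivial structure theorem from~\cite{eisenbud96:_binom_ideal}. Both are complete; I would only note that your final step tacitly uses that for trivial-character lattice ideals $I_L\subset I_{L'}$ iff $L\subset L'$, which is standard but worth a word.
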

\begin{proof} According to \cite[Definition~10.4]{kahle11mesoprimary}
  we show that $J_n = \<x^{u^+} - x^{u^-}: u\in L'_n\>$, since the
  quotient by this ideal is the group algebra
  $\kk[\ZZ^{c_{n+1}}/L'_n]$.  By the correspondence between
  non-negative lattice walks and binomial ideals
  \cite[Theorem~1.1]{diaconis98:_lattic} we prove that for any
  $u = u^+ - u^- \in L'_n$, the parts $u^+,u^-\in\NN^{c_{n+1}}$ can be
  connected using \emph{moves} $[ij|ij]$ without leaving
  $\NN^{c_{n+1}}$.  

  The vectors $u^+,u^-$ can be represented by upper triangular
  non-negative integer matrices.  From Definition~\ref{d:JnIdeal} it
  is obvious that all off-diagonal entries of $u^+ - u^-$ are
  divisible by two.  Since
  \[ \<x^{u^+} - x^{u^-}: u\in L'_n\> : x_{ij} = \<x^{u^+} - x^{u^-}:
    u\in L'_n\> \]
  for any variable $x_{ij}$, we can assume that $u^{+}$ and $u^-$ have
  disjoint supports and thus individually have off-diagonal entries
  divisible by two.  Consequently the moves $[ij|ij]$ allow to reduce
  all off-diagonal entries to zero, while increasing the diagonal
  entries.  As visible from its basis, the lattice $L'_n$ contains no
  nonzero diagonal matrices and thus $u^+$ and $u^-$ have been
  connected to the same diagonal matrix.
\end{proof}

\begin{remark}\label{r:groupAlgebra}
  From Proposition~\ref{p:torsion} it follows immediately that the
  group algebra $\kk[\ZZ^{c_{n+1}}]/J_n\kk[\ZZ^{c_{n+1}}]$ is
  isomorphic to $\kk[\ZZ^n\oplus (\ZZ/2\ZZ)^{c_{n-1}}]$.  In
  particular $\kk[\NN^{c_{n+1}}]/J_n$ is finely graded by the monoid
  $\NN V_n \oplus (\ZZ/2\ZZ)^{c_{n-1}}$.
\end{remark}

\begin{defn}\label{d:twisting}
  A \emph{$\ZZ_{2}$-twisting} is a ring automorphism of a (Laurent)
  polynomial ring that maps the indeterminates either to themselves or
  to their negatives.
\end{defn}

The lattice points in a fundamental zonotope of $L_n'$ play an
important role in the following developments.  The most succinct way
to encode them is using their generating function (which in this case
is simply a polynomial in the Laurent ring $\kk[\ZZ^{c_{n+1}}]$).  The
explicit form, of course, depends on the coordinates chosen.  The next
lemma is immediate from the definition of~$\mathcal{B}'$.
\begin{lemma}\label{l:genZono} 
  Let $M=\{[in|jn] : i\neq j \in [n-1]\}$.  The generating function of
  the fundamental zonotope of $\mathcal B'$ is
  \[ 
    p_n=\prod_{m\in M}(x^{m}+1)=\prod_{m\in M}(x^{m^+}+x^{m^-})
  \]
\end{lemma}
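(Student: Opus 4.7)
The plan is to identify the $L_n$-lattice points in the half-open fundamental zonotope of $\mathcal{B}'$ and show that they correspond exactly to subsets of $M$. Standard theory of fundamental domains says that these lattice points form a complete set of coset representatives for $L_n/L'_n$, and Proposition~\ref{p:torsion} gives $|L_n/L'_n| = 2^{c_{n-1}}$. Since $|M| = \binom{n-1}{2} = c_{n-1}$, the $2^{c_{n-1}}$ subsets of $M$ match this index exactly, and this matching is the structural coincidence that drives the proof.

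The next step is to exhibit the subset-sums $\sigma_S \defas \sum_{m \in S} m$ for $S \subset M$ as the desired lattice points. Each $\sigma_S$ lies in $L_n$ because $M \subset \mathcal{B}$, where $\mathcal{B}$ is the lattice basis of $L_n$ from Lemma~\ref{l:veroLat}, and distinct subsets yield distinct integer combinations of basis vectors. To confirm that $\sigma_S$ actually sits in the half-open parallelepiped spanned by $\mathcal{B}'$, I write it in the $\mathcal{B}'$-basis with coefficient $\tfrac{1}{2}$ on each generator $2m \in \mathcal{B}'$ with $m \in S$ and coefficient $0$ on every other generator, including on all of the $[in|in]$. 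All coefficients lie in $\{0, \tfrac{1}{2}\} \subset [0, 1)$, so $\sigma_S$ is in the zonotope.

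Once the $\sigma_S$ are in place, the count $|\{\sigma_S : S \subset M\}| = 2^{c_{n-1}} = [L_n : L'_n]$ forces them to exhaust the lattice points in the zonotope. Their generating function is then
\[
  \sum_{S \subset M} x^{\sigma_S} = \prod_{m \in M}(1 + x^m),
\]
which is the first expression for $p_n$. The second expression follows from the identity $x^{m^+} + x^{m^-} = x^{m^-}(1 + x^m)$, so that $\prod_{m \in M}(x^{m^+} + x^{m^-}) = \bigl(\prod_{m \in M} x^{m^-}\bigr) \cdot \prod_{m \in M}(1 + x^m)$; this differs from the first form only by a global monomial, corresponding to translating the zonotope so that each basis segment runs from $m^-$ to $m^+$ rather than from $0$ to $m$. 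Since a fundamental zonotope is only defined up to translation, both expressions legitimately encode it.

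The main obstacle is less a mathematical difficulty than a conceptual subtlety: one has to see that the $[in|in]$ generators of $\mathcal{B}'$ contribute nothing nontrivial to the integer points of the zonotope. Rather than verifying this by a direct case analysis on coefficients $c \in [0,1)$ at the coordinates $(i,n)$ (where $[in|in]$ has entry $-2$ and can interact with the $2[jn|kn]$), the clean route is to let the index count from Proposition~\ref{p:torsion} do the work, since it matches the number of subsets of $M$ and therefore forces exhaustion without further effort.
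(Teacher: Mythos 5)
Your proof is correct. The paper leaves this lemma unproved, remarking only that it is ``immediate from the definition of $\mathcal{B}'$''; the intended argument is the observation stated just before the lemma, namely that the transition matrix from $\mathcal{B}$ to $\mathcal{B}'$ is diagonal (entries $2$ on the $c_{n-1}$ generators $2[in|jn]$ with $i\neq j$, entries $1$ on the $n-1$ generators $[in|in]$), so that in $\mathcal{B}$-coordinates the half-open parallelepiped of $\mathcal{B}'$ is the box $[0,2)^{c_{n-1}}\times[0,1)^{n-1}$ whose lattice points are visibly $\{0,1\}^{c_{n-1}}\times\{0\}^{n-1}$. You reach the same set by exhibiting the $2^{c_{n-1}}$ subset-sums $\sigma_S$, verifying membership in the zonotope via the $\{0,\tfrac12\}$ coefficient vectors, and then invoking the index $[L_n:L'_n]=2^{c_{n-1}}$ from Proposition~\ref{p:torsion} to force exhaustion. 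This trades the one-line diagonal-coordinate observation for a counting argument; both are valid and essentially the same length, and your route sidesteps the (admittedly trivial) check that the $[in|in]$ generators contribute nothing. Your remark that the two displayed forms of $p_n$ differ by the monomial unit $\prod_{m\in M}x^{m^-}$ is also correct and is consistent with the paper's later choice of $\prod_{m\in M}(x^{m^+}+x^{m^-})$ as the polynomial representative of $p_n$.
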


It is useful for the further development to pick the second
representation of $p_n$ in Lemma~\ref{l:genZono} as a representative
of $p_n$ in polynomial ring~$\kk[\NN^{c_{n+1}}]$.  Its image in the
quotient $\kk[\NN^{c_{n+1}}]/J_n$ also has a natural representation.
The terms of $p_n$ can be identified with upper triangular integer
matrices which arise as sums of positive and negative parts of
elements of $M$.  A positive part of $[in|jn] \in M$ has entries $1$
at positions $(i,j)$ and $(n,n)$ while a negative part has two entries
$1$ in the last column, but not at~$(n,n)$.  Modulo the moves
$\mathcal{B'}$, any exponent matrix of a monomial of $p_n$ can be
reduced to have only entries $0$ or $1$ in its off-diagonal positions.

\begin{remark}\label{r:pnterms}
  A simple count yields that $p_n$ has $V_n$-degree
  $(n-2,\ldots,n-2,2c_{n-1})$.  In the natural representation of
  monomials of $p_n$ as integer matrices with entries $0/1$ off the
  diagonal, there is a lower bound for the value of the $(n,n)$ entry.
  To achieve the lowest value, one would fill the last column with
  entries $1$ using negative parts of elements of $M$, and then use
  positive parts (which increase~$(n,n)$).  For example, if $n$ is
  even, there is one term of~$p_n$ whose last column arises from the
  negative parts of $[1n|2n], [3n|4n], \dots, [(n-3)n|(n-2)n]$ and
  then positive parts of the remaining elements of~$M$.  If $n$ is
  odd, then there is one term of $p_n$, whose $n$-th column is
  $(1,\dots,1,\sigma_{n-1})$ for some value~$\sigma_{n-1}$.  In fact,
  since $|M| = c_{n-1}$, the lowest possible value of the $(n,n)$
  entry is $\sigma_{n-1} = c_{n-1}-\lfloor \frac{n-1}{2}\rfloor$.
\end{remark}

The primary decomposition of $J_n$ is given by~\cite[Theorem~2.1 and
Corollary~2.2]{eisenbud96:_binom_ideal}.
\begin{prop}\label{p:decompose}
  If $\charac(\kk) = 2$, the $J_n$ is primary over~$I_n$.  In all
  other characteristics, there exists $2^{c_{n-1}}$ $\ZZ_2$-twisting
  $\phi_i$ with $i\in[c_{n-1}]$, such that the complete intersection
  $J_{n}$ has prime decomposition
  \begin{equation}\label{eq:decompose}
    J_n=\bigcap_i\phi_i(I_{n}).
  \end{equation}

\end{prop}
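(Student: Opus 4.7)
The plan is to apply the binomial primary decomposition theorem of Eisenbud--Sturmfels \cite[Theorem~2.1]{eisenbud96:_binom_ideal} to the lattice-ideal structure of $J_n$ already extracted in the preceding propositions. Proposition~\ref{p:isMesoprime} presents $J_n$ as the lattice ideal $I(L'_n)$ with trivial character, while $I_n$ is the (prime) lattice ideal of the saturation $L_n$. Proposition~\ref{p:torsion} identifies the torsion quotient $L_n/L'_n \iso (\ZZ/2\ZZ)^{c_{n-1}}$; the character group of this quotient is exactly what indexes the minimal primes of~$J_n$.

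In characteristic two every homomorphism from a $2$-group to $\kk^{\times}$ is trivial, so Eisenbud--Sturmfels yields a single minimal prime, namely $I_n$, and $J_n$ is $I_n$-primary. When $\charac(\kk)\neq 2$, the character group has order $2^{c_{n-1}}$ and the decomposition reads
\[
  J_n = \bigcap_{\chi} I(L_n,\chi), \qquad I(L_n,\chi) \defas \< x^{u^+}-\chi(u)x^{u^-} : u\in L_n \>,
\]
with one prime per character $\chi\colon L_n/L'_n\to\{\pm 1\}$.

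To realise each $I(L_n,\chi)$ as $\phi_\chi(I_n)$ for a $\ZZ_2$-twisting I would lift $\chi$ to a sign vector $\epsilon\in\{\pm 1\}^{c_{n+1}}$ (extending along any section of the surjection $\ZZ^{c_{n+1}}\onto L_n/L'_n$) and set $\phi_\chi(x_{ij}) \defas \epsilon_{ij}x_{ij}$. Applying $\phi_\chi$ to a binomial generator $x^{u^+}-x^{u^-}$ of $I_n$ with $u\in L_n$ produces $\epsilon^{u^+}\bigl(x^{u^+}-\chi(u)x^{u^-}\bigr)$, because $\epsilon^{u^+-u^-}=\chi(u)$, so $\phi_\chi(I_n)=I(L_n,\chi)$ as required.

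The main obstacle is bookkeeping rather than calculation: one must verify that distinct characters produce distinct primary components (so that the list of $2^{c_{n-1}}$ twistings is genuinely of that length) and that the choice of lift $\epsilon$ is inconsequential at the level of ideals. Both facts follow from the uniqueness of the coefficient $\chi(u)$ appearing inside each binomial generator of $I(L_n,\chi)$, together with the observation that two lifts of the same $\chi$ differ by a sign vector in the image of $L'_n \to \ZZ^{c_{n+1}}/2\ZZ^{c_{n+1}}$, which by Proposition~\ref{p:isMesoprime} acts trivially on $I_n$.
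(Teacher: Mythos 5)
Your strategy matches the paper's, which simply cites \cite[Theorem~2.1 and Corollary~2.2]{eisenbud96:_binom_ideal} after establishing Propositions~\ref{p:isMesoprime} and~\ref{p:torsion}; your version makes explicit the character-theoretic bookkeeping that the citation leaves implicit. The core argument is sound: the minimal primes of the lattice ideal $J_n = I(L'_n)$ are indexed by extensions of the trivial character on $L'_n$ to the saturation $L_n$, equivalently by characters of $L_n/L'_n\cong(\ZZ/2\ZZ)^{c_{n-1}}$, each realized by a $\ZZ_2$-twisting of $I_n$, with the characteristic-two collapse handled correctly.

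Two justifications in your last paragraph are not right as stated, although your conclusions are. First, there is no surjection $\ZZ^{c_{n+1}}\onto L_n/L'_n$ to take a section of; what you actually need is that $L_n=\ker V_n$ is saturated, so $\ZZ^{c_{n+1}}/L_n$ is free, the inclusion $L_n\into\ZZ^{c_{n+1}}$ splits, and the character $\chi$ (pulled back to $L_n$) extends to a sign vector on all of $\ZZ^{c_{n+1}}$. Second, two lifts $\epsilon_1,\epsilon_2$ of the same $\chi$ do not differ by a sign vector in the image of $L'_n$; they differ by one trivial on all of $L_n$, i.e.\ a character of the free quotient $\ZZ^{c_{n+1}}/L_n$. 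That is exactly what you need: any $\ZZ_2$-twisting whose sign vector is trivial on $L_n$ sends each generator $x^{u^+}-x^{u^-}$ of $I_n$ (with $u\in L_n$) to a unit multiple of itself, hence fixes $I_n$. There is no need to route the argument through $L'_n$ or Proposition~\ref{p:isMesoprime} at this point. With those two fixes the write-up is complete.
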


\begin{thm}\label{thm:mysterious} 
  If $n$ is odd intersecting all but one of the components in
  \eqref{eq:decompose} yields
  \[ 
    \bigcap_{i\neq l}\phi_i(I_{n})=J_n+ \< \phi_l(p^+_{n,i}) : i\in [n] \>,
  \]
  where $p^+_{n,i} \in\kk[\NN^{c_{n+1}}]$ are homogeneous polynomials
  of degree~$\frac{(n-1)^2}{2}$ that are given as generating functions of
  the fibers $V_n^{-1}[(n-2,\dots,n-2) + e_i]$.  If $n$ is even, then
  the same holds for a single polynomial $p^+_{n}$ of degree
  $\frac{n(n-2)}{2}$, given as the generating function of
  $V_n^{-1}[(n-2,\dots,n-2)]$.
\end{thm}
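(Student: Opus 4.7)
The plan is to leverage the fine multigrading on $R \defas \kk[\NN^{c_{n+1}}]/J_n$ and a character decomposition coming from the twistings. Since each $\phi_l$ is a ring automorphism of $\kk[\NN^{c_{n+1}}]$ preserving $J_n$, it suffices to treat $l = \mathrm{id}$ and prove
\[
\bigcap_{i\neq\mathrm{id}}\phi_i(I_n) = J_n + \<p_{n,i}^+ : i\in[n]\>
\]
for odd $n$, respectively $\bigcap_{i\neq\mathrm{id}}\phi_i(I_n) = J_n + \<p_n^+\>$ for even $n$. By Remark~\ref{r:groupAlgebra}, $R$ is finely graded by $\NN V_n \oplus (\ZZ/2)^{c_{n-1}}$, with each graded piece one-dimensional over $\kk$. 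Fix a $V_n$-degree $\mathbf{b}$ and write $T_\mathbf{b} \subseteq (\ZZ/2)^{c_{n-1}}$ for the set of torsion classes that actually appear; then $R_\mathbf{b}$ has $\kk$-dimension $|T_\mathbf{b}|$, with a basis $\{m_\tau\}_{\tau\in T_\mathbf{b}}$ given by monomial representatives.

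Each twisting $\phi_i$ acts on $R_\mathbf{b}$ diagonally with eigenvalue $\chi_i(\tau)$ on $m_\tau$, for $\chi_i$ the character of $L_n/L'_n \cong (\ZZ/2)^{c_{n-1}}$ associated to $\phi_i$. Consequently $(\phi_i(I_n)/J_n) \cap R_\mathbf{b}$ is the hyperplane $\{(c_\tau) : \sum_\tau c_\tau \chi_i(\tau) = 0\}$. The crucial dichotomy is: when $T_\mathbf{b} = (\ZZ/2)^{c_{n-1}}$ is \emph{full}, the restrictions $\chi_i|_{T_\mathbf{b}}$ form a basis of $\kk^{T_\mathbf{b}}$, so removing the trivial character yields $2^{c_{n-1}}-1$ independent linear conditions whose common kernel is one-dimensional, spanned by the uniform vector $\sum_\tau m_\tau$; when $T_\mathbf{b}$ is a proper subset of $(\ZZ/2)^{c_{n-1}}$, a short argument using that no character of $(\ZZ/2)^{c_{n-1}}$ vanishes identically on the non-empty set $(\ZZ/2)^{c_{n-1}} \setminus T_\mathbf{b}$ shows that every $\chi_i|_{T_\mathbf{b}}$ lies in the span of the other restrictions, so removing one still spans $\kk^{T_\mathbf{b}}$ and the common kernel is zero. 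Therefore $(\bigcap_{i \neq \mathrm{id}} \phi_i(I_n))/J_n$ is supported exactly in $V_n$-degrees with full $T_\mathbf{b}$ and is one-dimensional there.

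It remains to identify the componentwise-minimal $\mathbf{b}$ with full $T_\mathbf{b}$, to exhibit explicit polynomials spanning the one-dimensional piece in these degrees, and to verify generation. Using the basis $\{[in|jn] : i<j\in[n-1]\}$ of $L_n/L'_n$, the torsion class of $x^u$ is encoded by the parities $(u_{ij} \bmod 2)_{i<j\in[n-1]}$, so $T_\mathbf{b}$ is full iff every parity pattern is realised by some non-negative integer solution of $V_n u = \mathbf{b}$. A direct analysis (picking the minimum off-diagonal values compatible with each prescribed parity and checking non-negativity of the forced diagonal entries) identifies the componentwise-minimal $\mathbf{b} \in \NN V_n$ with full $T_\mathbf{b}$ as $(n-2,\ldots,n-2)+e_i$ for $i\in[n]$ in the odd case and the single degree $(n-2,\ldots,n-2)$ in the even case, matching the theorem. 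The key technical step is then to verify that the polynomials $p_{n,i}^+$ (resp.~$p_n^+$) built from the fiber reduce modulo $J_n$ to a non-zero scalar multiple of $\sum_\tau m_\tau$ in the appropriate degree; I expect this to be the principal combinatorial obstacle, requiring a careful accounting of how torsion classes populate the fiber. Given this, generation follows because multiplication by a monomial $x^v$ sends $\sum_\tau m_\tau$ in $V_n$-degree $\mathbf{b}_0$ to $\sum_\tau m_\tau$ in $V_n$-degree $\mathbf{b}_0 + V_n v$ (re-indexing $\tau \mapsto \tau + \bar v$), and every full-$T_\mathbf{b}$ degree componentwise dominates some minimal $\mathbf{b}_i$ with difference in $\NN V_n$.
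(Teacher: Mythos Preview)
Your approach is correct and takes a genuinely different route from the paper's proof. The paper passes to the Laurent ring $\kk[\ZZ^{c_{n+1}}]$, where by a general result (its Theorem~\ref{t:laurentIntersection}) the subintersection equals $J_n\kk[\ZZ^{c_{n+1}}] + \langle p_n\rangle$ with $p_n$ the generating function of the fundamental zonotope of~$L'_n$ (Lemma~\ref{l:genZono}); it then pulls back to $\kk[\NN^{c_{n+1}}]$ by saturating at the product of the variables, which is carried out in Proposition~\ref{p:saturate} via Lemmas~\ref{l:pnpnplus}--\ref{l:colonContain}. You instead work directly in $R=\kk[\NN^{c_{n+1}}]/J_n$ graded by $\NN V_n\oplus(\ZZ/2\ZZ)^{c_{n-1}}$ and use character orthogonality to compute each $V_n$-graded piece of the subintersection. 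Your method is more direct and avoids the Laurent detour; the paper's method has the virtue of isolating a general lattice-ideal statement (Theorem~\ref{t:laurentIntersection}) applicable beyond this example.

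Two remarks. First, your proper-$T_\mathbf{b}$ step is right but the justification you sketch is a bit oblique; a clean version is: pick $\tau_0\notin T_\mathbf{b}$ and note that character orthogonality gives $\sum_i\chi_i(\tau_0)\,\chi_i|_{T_\mathbf{b}}=0$ with every coefficient $\chi_i(\tau_0)\in\{\pm1\}$ nonzero, so any single $\chi_i|_{T_\mathbf{b}}$ is in the span of the others. Second, the ``principal combinatorial obstacle'' you flag dissolves once you consult Definition~\ref{d:mysteriousP}: there $p^+_{n,i}$ is \emph{defined} as an element of $R$ by summing one monomial representative per $L'_n$-equivalence class in the minimal saturated fiber, i.e.\ it is literally your $\sum_\tau m_\tau$. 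The theorem's phrase ``generating function of the fiber'' is shorthand for that definition, not for the sum over all fiber elements (which would indeed require the extra argument you anticipate).
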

The proof of Theorem~\ref{thm:mysterious} occupies the remainder of
the section after the following example.

\begin{example}\label{e:n4}
  The complete intersection $J_4$ is a lattice ideal for the
  lattice~$L'_4$.  In the basis $\mathcal{B'}$, it is generated by the
  six elements
  \[
    \{2[i4|j4] : i<j \in [3] \} \cup \{[i4|i4] : i \in [3]\}.
  \]
  Three of the six elements correspond to principal minors:
  \[x_{11}x_{44}-x_{14}^2,x_{22}x_{44}-x_{24}^2,x_{33}x_{44}-x_{34}^2\]
  The other elements give the binomials
  \[
    x_{12}^2x_{44}^2-x_{14}^2x_{24}^2,
    x_{13}^2x_{44}^2-x_{14}^2x_{34}^2,
    x_{23}^2x_{44}^2-x_{24}^2x_{34}^2
  \]
  These six binomials do not generate $J_4$, but $J_4$ equals the
  saturation with respect to the product of the
  variables~\cite[Lemma~7.6]{miller05:_combin_commut_algeb}. The
  $2^3 = 8$ minimal prime components of $J_n$ are obtained by all
  possible twist combinations of the monomials $\pm x_{14}x_{24}$,
  $\pm x_{14}x_{34}$, $\pm x_{24}x_{34}$.  Consider the mysterious
  polynomial
  \[
    p_4=(x_{12}x_{44}+x_{14}x_{24})(x_{13}x_{44}+x_{14}x_{34})(x_{23}x_{44}+x_{24}x_{34}),
  \]
  which is the generating function of the fundamental zonotope of
  $L_4'$ in the basis~$\mathcal{B}'$ and of $V_4$-degree~$(2,2,2,6)$.
  \emph{In the Laurent ring} $\kk[\ZZ^{10}]$, the desired ideal
  $J_4 : I_4$ equals~$J_4 + \<p_4\>$.  To do the computation in the
  polynomial ring, we need to saturate with respect to
  $\prod_{ij}x_{ij}$.  If $n$ is even, this saturation generates one
  polynomial, if $n$ is odd, it generates $n$ polynomials.  Here,
  where $n=4$, the ideal $J_4 : I_4$ is generated by $J_4$ and the
  single polynomial
  \begin{align*}
    p_4^+ & =x_{11}x_{22}x_{33}x_{44}+x_{11}x_{23}x_{24}x_{34}+ x_{13}x_{14}x_{22}x_{34}+ x_{12}x_{14}x_{24}x_{33} \\ 
          & + x_{13}x_{14}x_{23}x_{24}+ x_{12}x_{14}x_{23}x_{34}+ x_{12}x_{13}x_{24}x_{34}+x_{12}x_{13}x_{23}x_{44}.
  \end{align*}
  Modulo the binomials in $J_4$, the polynomial $p_4^+$
  equals~$p_4/x_{44}^2$ (Lemma~\ref{l:pnpnplus}).
\end{example}

As a first step towards the proof of Theorem~\ref{thm:mysterious} we
compute the monoid $Q$ under which $\kk[\NN^{c_{n+1}}]/J_n$ is finely
graded, meaning that its $Q$-graded Hilbert function takes values only
zero or one.  That is, we make Remark~\ref{r:groupAlgebra} explicit.
\begin{lemma}\label{l:fiberparam}
  Fix $\mathbf{b} \in \cone(V_n)$ for some $n$.  The equivalence
  classes of lattice points in the fiber $V_n^{-1}[\mathbf{b}]$,
  modulo the moves $\mathcal{B}'$, are in bijection with set of 0/1
  matrices $u\in\{0,1\}^{c_{n+1}}$ of the following form
  \begin{itemize}
  \item $u_{ii} = 0$, for all $i\in [n]$
  \item $u_{in} = 0$, for all $i\in [n]$
  \item $\mathbf{b} - V_n u \in \NN^n$.
  \end{itemize}
\end{lemma}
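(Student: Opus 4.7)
The plan is to realize the isomorphism $L_n/L'_n \cong (\ZZ/2\ZZ)^{c_{n-1}}$ from Proposition~\ref{p:torsion} explicitly via mod-$2$ residues of the interior off-diagonal entries $v_{ij}$ with $i<j<n$, then decompose any fiber element accordingly into an interior $0/1$ part $u$ and a diagonal-plus-last-column part whose $V_n$-degree is the residual $\mathbf{b}-V_nu$.

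First I would define the group homomorphism $\phi\colon L_n \to (\ZZ/2\ZZ)^{c_{n-1}}$ by $\phi(v)=(v_{ij}\bmod 2)_{i<j<n}$. Checking the lattice basis $\mathcal B$ of $L_n$ from Lemma~\ref{l:veroLat} shows that $\phi$ is surjective, and checking $\mathcal B'$ shows $L'_n\subseteq\ker\phi$, since $2[in|jn]$ contributes $0$ mod $2$ to every interior coordinate and $[in|in]$ is supported only on diagonal and last-column positions. Since both $L_n/L'_n$ and $(\ZZ/2\ZZ)^{c_{n-1}}$ have cardinality $2^{c_{n-1}}$, $\phi$ descends to an isomorphism on the quotient. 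Hence, for any $v\in V_n^{-1}[\mathbf{b}]$, the residues $(v_{ij}\bmod 2)_{i<j<n}$ are invariants of the equivalence class, and defining $u_{ij}=v_{ij}\bmod 2$ for $i<j<n$ (with $u_{ii}=u_{in}=0$) gives a well-defined map from equivalence classes to $0/1$ matrices of the required shape.

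For the inclusion $\mathbf{b}-V_nu\in\NN^n$, I would observe that $u_{ij}\le v_{ij}$ componentwise, because $u_{ij}\in\{0,1\}$ and $u_{ij}\equiv v_{ij}\bmod 2$ while $v_{ij}\ge 0$; consequently $V_nu\le V_nv=\mathbf{b}$ in every coordinate. Injectivity of the map is immediate from $\ker\phi=L'_n$: two fiber elements with the same interior residues differ by an element of $L'_n$, hence are equivalent modulo $\mathcal B'$.

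The main obstacle is surjectivity: given an admissible $u$ one must produce a representative $v\in V_n^{-1}[\mathbf{b}]\cap\NN^{c_{n+1}}$ whose interior residues match $u$. The natural construction is to set $v_{ij}=u_{ij}$ for $i<j<n$ and to absorb $\mathbf{r}:=\mathbf{b}-V_nu\in\NN^n$ into the diagonal and last-column entries. This reduces to solving $2v_{kk}+v_{kn}=r_k$ for each $k<n$ together with $2v_{nn}+\sum_{i<n}v_{in}=b_n$; taking $v_{kn}:=r_k\bmod 2$ and $v_{kk}:=\lfloor r_k/2\rfloor$ handles the first $n-1$ equations with non-negative entries. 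Non-negativity of $v_{nn}$ together with the required parity follows from a careful bookkeeping argument using the identity $\sum_k(V_nv)_k=2\sum_{ij}v_{ij}$, which forces $\sum_kb_k$ to be even; this parity check, supplied by $\mathbf{b}\in\cone(V_n)\cap\NN V_n$, is the step that most requires care and is what makes the hypothesis on $\mathbf{b}$ essential.
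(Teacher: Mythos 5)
Your explicit homomorphism $\phi\colon L_n \to (\ZZ/2\ZZ)^{c_{n-1}}$ with $\ker\phi = L'_n$ is a cleaner and more structural route to the injectivity half than the paper's brief assertion, and your inverse construction ($v_{kn}=r_k\bmod 2$, $v_{kk}=\lfloor r_k/2\rfloor$) is precisely what the paper's ``fixes the diagonal entry by linearity'' means. However, the surjectivity step has a genuine gap that ``careful bookkeeping'' cannot close: non-negativity of $v_{nn}$ actually fails for some admissible $u$. Take $n=3$ and $\mathbf{b}=(2,2,0)\in\NN V_3$. The matrix $u$ with $u_{12}=1$ and all other entries zero satisfies $\mathbf{b}-V_3u=(1,1,0)\in\NN^3$, so it passes all three bullet conditions; but then $r=(1,1,0)$ forces $v_{13}=v_{23}=1$, whence $2v_{33}=b_3-v_{13}-v_{23}=-2<0$. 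Indeed $V_3^{-1}[(2,2,0)]=\{x_{11}x_{22},\,x_{12}^2\}$ is a single class modulo $[12|12]\in L'_3$, so the $u_{12}=1$ cell is genuinely empty: the map from classes to admissible $u$'s is injective but not onto. Your parity check is correct, but parity delivers $b_n\equiv\sum_{i<n}v_{in}\pmod 2$, not the needed inequality $b_n\ge\sum_{i<n}v_{in}$.

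Note that the paper's proof does not check non-negativity of $v_{nn}$ either, so your attempt exposes that the lemma is stated more broadly than it holds. What rescues the paper is that the lemma is only invoked through Remark~\ref{r:bigEnough}, i.e.\ for $\mathbf{b}$ with every $b_i\ge n-2$; there surjectivity does hold: $\sum_{i<n}v_{in}\le n-1$ always, and if $b_n=n-2$ the case $\sum_{i<n}v_{in}=n-1$ is excluded by the very parity you identified (since $n-1\not\equiv n-2\pmod 2$), giving $\sum_{i<n}v_{in}\le n-2\le b_n$. Adding the hypothesis $b_i\ge n-2$ for all $i$ (or weakening the conclusion to an injection in general) would repair both the statement and your proof; the assertion that the hypothesis $\mathbf{b}\in\cone(V_n)$ alone suffices is not correct.
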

\begin{proof}
  Each equivalence class of upper triangular matrices has a
  representative whose off-diagonal entries are all either zero or
  one.  The bijection maps such an equivalence class to the $c_{n-1}$
  entries that are off-diagonal and off the last column.  To prove
  that this is a bijection it suffices to construct the inverse map.
  To this end, let $u$ satisfy the properties in the statement.  In
  each row $i=1,\dots,n$, there are two values unspecified: the
  diagonal entry and the entry in the last column.  Given
  $\mathbf{b}_i$, and using the choice of representative modulo
  $\mathcal{B}'$ whose last column entries is either 0 or~1, fixes the
  diagonal entry too by linearity.  Therefore the map is a bijection.
\end{proof}

\begin{remark}\label{r:bigEnough}
  If $\mathbf{b}_i \ge (n-2)$ for all $i\in[n]$, then any 0/1 upper
  triangular $(n-2)$-matrix is a possible choice for the off-diagonal
  off-last column entries of $u$ in Lemma~\ref{l:fiberparam}.  An
  upper triangular $(n-2)$-matrix has $c_{n-1}$ entries.  Thus all
  those fibers have equivalence classes modulo $\mathcal{B}'$ that are
  in bijection with $\{0,1\}^{c_{n-1}}$.  In particular, each of those
  fibers, has the same number of equivalence classes.
\end{remark}

\begin{remark}\label{r:toral}
  Remark~\ref{r:bigEnough} implies that in the $V_n$-grading,
  $\kk[\NN^{c_{n+1}}]/J_n$ is \emph{toral} as in
  \cite[Definition~4.3]{dickenstein08:_combin_binom_primar_decom}: its
  $V_n$-graded Hilbert function is globally bounded by~$2^{c_{n-1}}$.
\end{remark}

If $n$ is odd, then $(n-2,\dots,n-2) \notin \NN V_n$.  Therefore the
minimal (with respect to addition in the semigroup $\cone(V_n)$)
fibers that satisfy Remark~\ref{r:bigEnough} are
$(n-1,n-2,\dots,n-2), \dots, (n-2,\dots,n-2,n-1)$.  If $n$ is even,
there is only one minimal fiber.

For the proof of Theorem~\ref{thm:mysterious} it is convenient to work
in the quotient ring $\kk[\NN^{c_{n+1}}]/J_n$.  Since
$I_n \supset J_n$ and $I_n$ is finely graded by~$\NN V_n$, each
equivalence class is contained in a single fiber
$V_n^{-1}[\mathbf{b}]$ and each fiber breaks into equivalence classes.
The following definition sums the monomials corresponding to these
classes for specific fibers.

\begin{defn}\label{d:mysteriousP}
  The \emph{minimal saturated fibers} are the minimal fibers that
  satisfy Remark~\ref{r:bigEnough}.  The generating function of the
  equivalence class in a minimal saturated fiber is denoted by
  $p_{n,i}^+$.  That is
  \[
    p_{n,i}^+ = \sum_{\mathbf{a}\in
    V_{n}^{-1}[\mathbf{b}_i]/L'_n}\mathbf x^{\mathbf{a}} \in
    \kk[\NN^{c_{n+1}}]/J_n.
  \]
  where $\mathbf{b}_i := (n-2,\dots,n-2) + e_i$ if $n$ is odd and
  $\mathbf{b}_i = (n-2,\dots,n-2)$ if $n$ is even.
\end{defn}

If $n$ is even, Definition~\ref{d:mysteriousP} postulates only one
polynomial which is simply denoted $p_n^+$ when convenient.
Sometimes, however, it can be convenient to keep the indices.

\begin{remark}\label{r:LaurentVersion}
  The construction of a generating function of equivalence classes of
  elements of the fiber in Definition~\ref{d:mysteriousP} can be
  carried out for any fiber of $V_n$.  For the fiber
  $(n-2,\dots,n-2,2c_{n-1})$ we get the polynomial $p_n$ from
  Lemma~\ref{l:genZono}.
\end{remark}

The quantity $\sigma_{n-1} = c_{n-1}-\lfloor \frac{n-1}{2}\rfloor$
(that is $c_{n-1}-\frac{n-2}{2} = \frac{(n-2)^2}{2}$ for even~$n$, and
$c_{n-1}-\frac{n-1}{2} = \frac{(n-1)(n-3)}{2}$ for odd $n$) appeared
in Remark~\ref{r:pnterms} and shows up again in the next lemma: it
almost gives the saturation exponent when passing from the Laurent
ring to the polynomial ring.

\begin{lemma}\label{l:pnpnplus}
  As elements of $\kk[\NN^{c_{n+1}}]/J_n$, if $n$ is even then,
  $x_{nn}^{\sigma_{n-1}} p_{n,i}^+ = p_n$, and if $n$ is odd, then,
  $x_{nn}^{\sigma_{n-1}+1} p_{n,i}^+ = x_{in}p_n$.
\end{lemma}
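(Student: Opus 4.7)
The plan is to recognize each side of both identities as the ``all-ones'' element in the equivalence-class basis of a single $V_n$-graded component of $\kk[\NN^{c_{n+1}}]/J_n$, after which equality is automatic.

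First I would check that the $V_n$-degrees match. Remark~\ref{r:pnterms} supplies $\deg p_n=(n-2,\ldots,n-2,2c_{n-1})$, and unwinding $\sigma_{n-1}=c_{n-1}-\lfloor(n-1)/2\rfloor$ shows that in the even case $x_{nn}^{\sigma_{n-1}}p_{n,i}^+$ and $p_n$ share the degree $(n-2,\ldots,n-2,2c_{n-1})$, while in the odd case $x_{nn}^{\sigma_{n-1}+1}p_{n,i}^+$ and $x_{in}p_n$ both have degree $(n-2,\ldots,n-2,2c_{n-1})+e_i+e_n$ for every $i\in[n]$. In every case the common target degree $\mathbf{b}$ satisfies $\mathbf{b}_j\ge n-2$ for all $j$, so Remark~\ref{r:bigEnough} identifies $(\kk[\NN^{c_{n+1}}]/J_n)_{\mathbf{b}}$ as a $\kk$-vector space of dimension $2^{c_{n-1}}$ with basis indexed by the $L_n'$-classes in the fiber.

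Next I would argue that $p_n$, $p_{n,i}^+$, and $p_n^+$ are each the all-ones vector in their basis. For $p_{n,i}^+$ and $p_n^+$ this is Definition~\ref{d:mysteriousP}. For $p_n$ I would expand $\prod_{m\in M}(x^{m^+}+x^{m^-})$ into $2^{c_{n-1}}$ monomials indexed by subsets $S\subset M$, which are pairwise distinct because $M\subset\mathcal B$ is $\ZZ$-linearly independent by Lemma~\ref{l:veroLat}. Moreover, the image of $M$ in $L_n/L_n'\cong(\ZZ/2\ZZ)^{c_{n-1}}$ is a $\ZZ/2\ZZ$-basis: $\mathcal B$ generates $L_n$, the vectors $[in|in]$ already lie in $L_n'$ by Definition~\ref{d:JnIdeal}, and $|M|=c_{n-1}$ equals the $\ZZ/2\ZZ$-rank of $L_n/L_n'$ by Proposition~\ref{p:torsion}. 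Consequently distinct $S$ yield distinct $L_n'$-cosets, and $p_n$ realizes every class exactly once.

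To handle the monomial factors on the two sides I would invoke the identity $J_n:x_{ij}=J_n$ from the proof of Proposition~\ref{p:isMesoprime}: multiplication by any monomial is an injective $\kk$-linear map between graded components of $\kk[\NN^{c_{n+1}}]/J_n$, and between two components of equal finite dimension it is a bijection that permutes the equivalence-class bases. Multiplying an all-ones vector by such a monomial therefore produces the all-ones vector of the target, and both sides of each equation fit this description, proving the lemma. The main obstacle is really only bookkeeping, keeping the two parities of $n$ aligned with the correct degree shifts; the one structural input needing justification is that $M$ descends to a $\ZZ/2\ZZ$-basis of $L_n/L_n'$, which falls out of Lemma~\ref{l:veroLat} and Proposition~\ref{p:torsion}.
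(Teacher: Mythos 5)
Your proof is correct and takes essentially the same route as the paper: match $V_n$-degrees on both sides, then observe that each side is the generating function of equivalence classes in its fiber and appeal to Remark~\ref{r:bigEnough} (dimension $2^{c_{n-1}}$) and Remark~\ref{r:LaurentVersion} to conclude they coincide. You fill in more detail than the paper's terse proof — in particular, that $M$ descends to a $\ZZ/2\ZZ$-basis of $L_n/L_n'$ so $p_n$ really hits every class exactly once, and that a variable is a nonzerodivisor so multiplication is a bijection of graded components — but these are exactly the facts the paper's citation of the two Remarks is invoking.
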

\begin{proof}
  If $n$ is even, the product $x_{nn}^{\sigma_{n-1}} p_{n,i}^+$ has
  $V_n$-degree $(n-2,\dots,n-2,2c_{n-1})$.  If $n$ is odd, the degree
  of $x_{nn}^{\sigma_{n-1} + 1} p_{n,i}^+$ equals
  $(n-2,\dots,n-2,2c_{n-1}+1) + e_i$.  Now these products equal $p_n$
  if $n$ is even and $x_{in}p_n$ if $n$ is odd by
  Remarks~\ref{r:bigEnough} and~\ref{r:LaurentVersion}.
\end{proof}

\begin{lemma}\label{l:syzygy}
  If $n$ is odd, then for any triple of distinct indices
  $i,j,k\in[n]$, in $\kk[\NN^{c_{n+1}}]/J_n$ we have
  $x_{ij}p_{n,k}^+ = x_{jk}p_{n,i}^+$.
\end{lemma}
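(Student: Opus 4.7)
The plan is to show that both $x_{ij}p_{n,k}^+$ and $x_{jk}p_{n,i}^+$ equal the generating function in $\kk[\NN^{c_{n+1}}]/J_n$ of all equivalence classes modulo $L'_n$ of a single, common fiber of $V_n$, making the equality immediate.

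First I would compute the $V_n$-degrees of both sides. Since $p_{n,k}^+$ lives in the fiber over $\mathbf{b}_k = (n-2,\dots,n-2) + e_k$, the product $x_{ij}p_{n,k}^+$ has $V_n$-degree $\mathbf{b}_k + e_i + e_j = (n-2,\dots,n-2) + e_i + e_j + e_k$. By symmetry the same holds for $x_{jk}p_{n,i}^+$, so both sides live in the same $V_n$-graded piece of $\kk[\NN^{c_{n+1}}]/J_n$. Call this target degree $\mathbf{c}$, and note that since $i,j,k$ are distinct and $n$ is odd, every coordinate of $\mathbf{c}$ is at least $n-2$, so Remark~\ref{r:bigEnough} applies to the fiber $V_n^{-1}[\mathbf{c}]$: it breaks into exactly $2^{c_{n-1}}$ equivalence classes modulo~$L'_n$.

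The main step is to show that $x_{ij}p_{n,k}^+$ is the generating function of \emph{all} equivalence classes in $V_n^{-1}[\mathbf{c}]/L'_n$. Pick representatives $\mathbf{a}_C$ of the $2^{c_{n-1}}$ classes $C \in V_n^{-1}[\mathbf{b}_k]/L'_n$ (existence guaranteed again by Remark~\ref{r:bigEnough} since $\mathbf{b}_k$ has all coordinates $\ge n-2$), so that $p_{n,k}^+ = \sum_C \mathbf{x}^{\mathbf{a}_C}$ in the quotient. Then $x_{ij}p_{n,k}^+ = \sum_C \mathbf{x}^{\mathbf{a}_C + e_{ij}}$, and each $\mathbf{a}_C + e_{ij}$ lies in $V_n^{-1}[\mathbf{c}]$. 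The map $C \mapsto [\mathbf{a}_C + e_{ij}]$ is an injection of equivalence classes, because $\mathbf{a}_C + e_{ij} \equiv \mathbf{a}_{C'} + e_{ij} \pmod{L'_n}$ forces $\mathbf{a}_C \equiv \mathbf{a}_{C'} \pmod{L'_n}$, hence $C = C'$. Since the source and target both have exactly $2^{c_{n-1}}$ classes, this injection is a bijection; therefore $x_{ij}p_{n,k}^+$ sums one monomial from each equivalence class of $V_n^{-1}[\mathbf{c}]$, i.e.\ it is the full generating function of $V_n^{-1}[\mathbf{c}]/L'_n$ in $\kk[\NN^{c_{n+1}}]/J_n$.

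Applying the identical argument to $x_{jk}p_{n,i}^+$, using that $\mathbf{b}_i + e_j + e_k = \mathbf{c}$ as well, yields the same generating function, completing the proof. The only real delicacy is confirming that every coordinate of $\mathbf{c}$ is $\ge n-2$ so that Remark~\ref{r:bigEnough} gives the matching count $2^{c_{n-1}}$ on both fibers; this is the step where one uses that $i,j,k$ are distinct. Everything else is bookkeeping with the parametrisation of classes from Lemma~\ref{l:fiberparam}.
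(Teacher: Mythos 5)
Your proposal is correct and takes essentially the same approach as the paper: both arguments rest on Remark~\ref{r:bigEnough} to count equivalence classes and conclude that each side of the asserted equality is the full generating function (with coefficients all one) of the common target fiber. The paper compresses your injectivity-of-translation argument into the observation that variables are nonzerodivisors on $\kk[\NN^{c_{n+1}}]/J_n$ (Proposition~\ref{p:isMesoprime}) and that the relations in $J_n$ are unital, so multiplication by a variable neither collapses terms nor alters coefficients, but the underlying mechanism is the one you spelled out.
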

\begin{proof}
  By Proposition~\ref{p:isMesoprime}, the variables are
  nonzerodivisors on $\kk[\NN^{c_{n+1}}]/J_n$.  The multidegree of
  $p_{n,k}^+$ satisfies the conditions of Remark~\ref{r:bigEnough},
  thus there are bijections between the monomials of $x_{ij}p_{n,k}^+$
  and $x_{jk}p_{n,i}^+$.  Since all relations in $J_n$ are equalities
  of monomials, multiplication with a variable does not touch
  coefficients.
\end{proof}

The following lemma captures an essential feature of our situation.
Since the $V_n$-graded Hilbert function of $\kk[\NN^{c_{n+1}}]$ is
globally bounded, there is a notion of \emph{longest homogeneous
polynomial} as one that uses all monomials in a given $V_n$-degree.
If such a polynomial is multiplied by a term, it remains a longest
polynomial.

\begin{lemma}\label{l:oneDimensional}
  The $V_n$-graded Hilbert functions of the
  $\kk[\NN^{c_{n+1}}]/J_n$-modules, $\<p_n\>$ and $\<p^+_{n,i}\>$,
  $i=1,\dots,n$ take only zero and one as their values.
\end{lemma}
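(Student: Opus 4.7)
My plan is to show that in every $V_n$-degree $\mathbf{d}$, the graded component of the principal ideal coincides modulo $J_n$ with the $\kk$-line spanned by a single distinguished element $P_\mathbf{d}$: the \emph{longest polynomial} in that degree, namely the sum of exactly one monomial representative per equivalence class of $V_n^{-1}[\mathbf{d}]$ under $L'_n$. If I can prove that every product $x^u\cdot p_n$ (or $x^u\cdot p_{n,i}^+$) with $u\in\NN^{c_{n+1}}$ equals such a $P_\mathbf{d}$ in the quotient, then any homogeneous element of the ideal is a $\kk$-linear combination of these equal elements and is hence a scalar multiple of $P_\mathbf{d}$.

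Both $p_n$ and $p_{n,i}^+$ are themselves longest polynomials for their respective fibers. For $p_{n,i}^+$ this is the content of Definition~\ref{d:mysteriousP}, and for $p_n$ it is Remark~\ref{r:LaurentVersion} together with Lemma~\ref{l:genZono}. In both cases the relevant $V_n$-degree has all coordinates at least $n-2$, so Remark~\ref{r:bigEnough} gives that the fiber has exactly $2^{c_{n-1}}$ equivalence classes modulo~$L'_n$, and each of these contributes one monomial to the generating function.

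The main step is to check that monomial multiplication preserves the property of being a longest polynomial. Fix $u\in\NN^{c_{n+1}}$. Multiplication by $x^u$ sends $V_n^{-1}[\mathbf{b}]$ into $V_n^{-1}[\mathbf{b}+V_n u]$ by translation, and since this translation is equivariant with respect to the $L'_n$-action, it descends to an injection on equivalence classes. Because the columns of $V_n$ are nonnegative, every coordinate of $\mathbf{b}+V_n u$ is at least the corresponding coordinate of $\mathbf{b}$; hence if the source fiber satisfies the hypothesis of Remark~\ref{r:bigEnough} then so does the target fiber, and both have exactly $2^{c_{n-1}}$ equivalence classes. An injection between finite sets of equal cardinality is a bijection, so $x^u\cdot p_n$ (respectively $x^u\cdot p_{n,i}^+$) hits every equivalence class in the target fiber exactly once and therefore equals $P_{\mathbf{d}}$ modulo~$J_n$, where $\mathbf{d}$ is the resulting $V_n$-degree.

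The conclusion is now immediate: any homogeneous element of $\<p_n\>$ in $V_n$-degree $\mathbf{d}$ is a $\kk$-linear combination of terms $x^u\cdot p_n$ with $V_n u = \mathbf{d}-\deg p_n$, and by the previous paragraph all such terms equal the single polynomial $P_\mathbf{d}$ in the quotient. Hence the degree-$\mathbf{d}$ component has dimension at most one, and the same argument applies verbatim to $\<p_{n,i}^+\>$. The main obstacle is the bijection-of-classes argument in the previous paragraph: one has to ensure that Remark~\ref{r:bigEnough} is preserved along the entire trajectory of multiplications, which follows cleanly from the nonnegativity of $V_n u$ but is the essential content of the lemma.
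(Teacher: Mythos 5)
Your proof is correct and follows essentially the same route as the paper: both reduce the claim to the fact that multiplying $p_n$ (or $p_{n,i}^+$) by a monomial again yields the generating function of the target fiber, because (i) multiplication cannot collapse distinct equivalence classes and (ii) by Remark~\ref{r:bigEnough} the source and target fibers have the same number ($2^{c_{n-1}}$) of classes. The paper compresses step (i) into the phrase ``multiplication by a term does not produce any cancellation,'' whereas you justify it explicitly via $L'_n$-equivariance of translation (one could equally invoke Proposition~\ref{p:isMesoprime}, which makes variables nonzerodivisors on the quotient); this is a welcome clarification but not a different proof.
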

\begin{proof}
  We only prove the statement for $\<p_n\>$ since the same argument
  applies also to $\<p^+_{n,i}\>$.  The claim is equivalent to the
  statement that any $f \in \<p_n\>$ is a term (that is, a monomial
  times a scalar) times~$p_n$.  Let $f = g p_n$ with a
  $V_n$-homogeneous~$g$.  Let $t_1,\dots,t_s$ be the terms of~$g$.
  Since $p_n$ is the sum of all monomials of degree $\deg(p_n)$, and
  multiplication by a term does not produce any cancellation, the
  number of terms of $t_i p_n$ equals that of $p_n$.  By
  Remark~\ref{r:bigEnough}, the monomials in degree $\deg(t_i p_n)$
  are in bijection with the monomials in degree $\deg(p_n)$, and
  therefore all $t_i p_n$ are scalar multiples of the generating
  function of the fiber for $\deg(t_i p_n)$.
\end{proof}

\begin{lemma}\label{l:colonContain}
  For any $i\in[n]$,
  $\<p^+_{n,i}\> : \left(\prod_{ij}x_{ij}\right)^\infty = \<p_{n,k}^+
  : k \in [n]\>$.
\end{lemma}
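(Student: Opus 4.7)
The plan is to prove the two inclusions separately. For $\supseteq$, Lemma~\ref{l:syzygy} does the work: in the even case the right-hand side is just $\<p_n^+\>$; in the odd case with $k\ne i$, choosing any $j\in[n]\setminus\{i,k\}$ (possible since $n\ge 3$) gives $x_{ij}p^+_{n,k}=x_{jk}p^+_{n,i}\in\<p^+_{n,i}\>$, placing $p^+_{n,k}$ in the variable-saturation.

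For $\subseteq$, let $f\in\<p^+_{n,i}\>:\left(\prod_{ij}x_{ij}\right)^\infty$. Since the saturation is $V_n$-homogeneous, I may assume $f$ is $V_n$-homogeneous of degree $D\in\NN V_n$. Fix $N$ with $mf=g\,p^+_{n,i}$ where $m=\left(\prod_{ij}x_{ij}\right)^N$ and $g$ is likewise $V_n$-homogeneous. By Lemma~\ref{l:oneDimensional}, $mf$ is a scalar multiple of the generating function $q_{D+\deg m}$, whose support modulo $J_n$ consists of exactly $2^{c_{n-1}}$ equivalence classes by Remark~\ref{r:bigEnough}. Because variables are nonzerodivisors (Proposition~\ref{p:isMesoprime}), multiplication by $m$ is injective on equivalence classes mod $J_n$, so $f$ has exactly $2^{c_{n-1}}$ distinct terms mod $J_n$, and matching coefficients on both sides of $mf=c\,q_{D+\deg m}$ forces each of them to be the same scalar~$c$. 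Combined with the toral bound $|V_n^{-1}[D]/L'_n|\le 2^{c_{n-1}}$ from Remark~\ref{r:toral}, this yields $f=c\cdot q_D$ and $|V_n^{-1}[D]/L'_n|=2^{c_{n-1}}$.

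It remains to show $q_D\in\<p^+_{n,k}:k\in[n]\>$. I would argue that the maximum-class condition at $D$ forces $D_j\ge n-2$ for every $j\in[n]$: since $V_n$ and $J_n$ are permutation-invariant on $[n]$, Lemma~\ref{l:fiberparam} has an analogue with any index $l\in[n]$ distinguished, and Remark~\ref{r:bigEnough} in each such avatar shows that $2^{c_{n-1}}$ classes can be attained only if $D_j\ge n-2$ for all $j\ne l$; varying $l$ gives the bound for all~$j$. For $n$ even this directly yields $D-\mathbf{b}\in\NN V_n$; for $n$ odd the parity mismatch (the sum $\sum_jD_j$ is even while $n(n-2)$ is odd) forces some $D_k\ge n-1$, so $D-\mathbf{b}_k\in\NN V_n$. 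Picking any monomial $s$ of the appropriate $V_n$-degree, $s\,p^+_{n,k}$ is a nonzero element of $\<p^+_{n,k}\>$ in $V_n$-degree~$D$, which by Lemma~\ref{l:oneDimensional} is a nonzero scalar multiple of~$q_D$. Hence $f=c\,q_D$ lies in $\<p^+_{n,k}\>\subseteq\<p^+_{n,k}:k\in[n]\>$.

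The main obstacle is the converse to Remark~\ref{r:bigEnough} invoked in the last paragraph: one must verify that maximality $|V_n^{-1}[D]/L'_n|=2^{c_{n-1}}$ really forces $D_j\ge n-2$ componentwise. Since Lemma~\ref{l:fiberparam} was proved with the index $n$ singled out, writing down the symmetric family of parametrizations and checking tightness in each one takes some care; a small side remark formalizing this converse would be the cleanest way to present the argument. Everything else is a straightforward tracking of equivalence classes and fiber sizes once $f$ is pinned down as $c\,q_D$.
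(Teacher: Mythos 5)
Your proof follows the same route as the paper's: both directions are handled as you describe, with $\supseteq$ from Lemma~\ref{l:syzygy}, and $\subseteq$ via Lemma~\ref{l:oneDimensional} forcing $mf$ (hence $f$) to have exactly $2^{c_{n-1}}$ terms, after which $f$ must be a monomial multiple of some $p^+_{n,k}$. The only difference is that the paper compresses the last step into a single appeal to Remark~\ref{r:bigEnough}, whereas you correctly point out that this actually uses a converse of that remark (attaining the bound $2^{c_{n-1}}$ forces $D_j\ge n-2$ for all $j$) and you sketch the permutation-invariance argument that establishes it; your parity analysis for locating the index $k$ with $D-\mathbf{b}_k\in\NN V_n$ is also sound.
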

\begin{proof}
  If $n$ is odd, the containment of $p_{n,k}^+$ in the left hand side
  follows immediately from Lemma~\ref{l:syzygy}.  If $n$ is even, it
  is trivial.  For the other containment, let $f$ be a
  $V_n$-homogeneous polynomial that satisfies $mf \in \<p_{n,i}^+\>$
  for some monomial~$m$.  We want $f \in \<p_{n,k}^+ : k \in [n]\>$.
  By Lemma~\ref{l:oneDimensional}, $mf = tp^+_{n,i}$ for some
  term~$t$.  Since $mf$ has the same number of terms as $f$ and also
  the same number of terms as $tp^+_{n,i}$, this number must be
  $2^{c_{n-1}}$.  By Remark~\ref{r:bigEnough}, the only
  $V_n$-homogeneous polynomials with $2^{c_{n-1}}$ terms are monomial
  multiples of the $p_{n,k}^+$ for $k\in[n]$.
\end{proof}

\begin{prop}\label{p:saturate}
  $(J_n + \<p_n\>) : \left(\prod_{ij} x_{ij}\right)^\infty = J_n +
  \<p_{n,j}^+, j\in [n]\>$.
\end{prop}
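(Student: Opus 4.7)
The plan is to prove both containments by chaining the previous lemmas. Since $J_n$, $p_n$, each $p^+_{n,j}$, and the saturating monomial $\prod_{ij} x_{ij}$ are all $V_n$-homogeneous, both sides of the claimed equality are $V_n$-graded ideals, so I may work one $V_n$-homogeneous component at a time throughout.

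For the inclusion $\supseteq$ it suffices to place each $p^+_{n,j}$ into the left-hand side. This falls straight out of Lemma~\ref{l:pnpnplus}: modulo $J_n$ one has $x_{nn}^{\sigma_{n-1}} p^+_{n,j} = p_n$ when $n$ is even and $x_{nn}^{\sigma_{n-1}+1} p^+_{n,j} = x_{jn} p_n$ when $n$ is odd, so in either case a monomial multiple of $p^+_{n,j}$ already sits in $J_n + \langle p_n\rangle$.

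For the inclusion $\subseteq$, I would take $V_n$-homogeneous $f$ with $mf \in J_n + \langle p_n\rangle$ for some monomial power $m$ of $\prod_{ij} x_{ij}$, and pass to $\bar R := \kk[\NN^{c_{n+1}}]/J_n$. Writing bars for images in $\bar R$, this reads $m\bar f \in \langle \bar p_n\rangle$, and Lemma~\ref{l:oneDimensional} rigidly forces $m\bar f = t\, \bar p_n$ for a single term $t$. Now substitute $\bar p_n$ via Lemma~\ref{l:pnpnplus}: directly in the even case, and in the odd case after first multiplying both sides by $x_{in}$ and using $x_{in}\bar p_n = x_{nn}^{\sigma_{n-1}+1}\bar p^+_{n,i}$. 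Either way a monomial multiple of $\bar f$ lies in $\langle \bar p^+_{n,i}\rangle$, so $\bar f$ lies in $\langle \bar p^+_{n,i}\rangle : (\prod_{ij} x_{ij})^\infty$. Lemma~\ref{l:colonContain} identifies this saturation with $\langle \bar p^+_{n,k} : k \in [n]\rangle$, which lifts to $f \in J_n + \langle p^+_{n,k} : k \in [n]\rangle$ back in $\kk[\NN^{c_{n+1}}]$.

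The central rigidifying step is Lemma~\ref{l:oneDimensional}, which collapses the \emph{a priori} arbitrary cofactor of $\bar p_n$ to a single term; without this, the substitution into Lemma~\ref{l:pnpnplus} would not go through cleanly. The only bit of care is the odd case, where $\bar p_n$ is not literally a monomial multiple of $\bar p^+_{n,i}$ and one must first multiply by $x_{in}$ — a harmless step that merely enlarges the saturating monomial and is absorbed by the subsequent saturation.
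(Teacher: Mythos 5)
Your proof is correct and follows essentially the same route as the paper's: the $\supseteq$ containment via Lemma~\ref{l:pnpnplus}, and the $\subseteq$ containment by passing to $\kk[\NN^{c_{n+1}}]/J_n$, trading $p_n$ for $p^+_{n,i}$ via Lemma~\ref{l:pnpnplus}, and then applying Lemma~\ref{l:colonContain}. The only (harmless) difference is that you invoke Lemma~\ref{l:oneDimensional} up front to force the cofactor of $\bar p_n$ to be a single term; the paper leaves the cofactor as an arbitrary polynomial $g$ at this stage, since the rigidity from Lemma~\ref{l:oneDimensional} is already baked into Lemma~\ref{l:colonContain} and is not needed again here.
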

\begin{proof}
  Throughout we work in the quotient ring
  $S := \kk[\NN^{c_{n+1}}]/J_n$ and want to show
  \[\<p_n\> : \left(\prod_{ij} x_{ij}\right)^\infty = \<p_{n,j}^+, j
    \in [n]\>.\]
  Lemma~\ref{l:pnpnplus} gives the inclusion $\supseteq$, since it
  shows that, modulo $J_n$, a monomial multiple of $p_{n,i}^+$ is
  equal to either $p_n$ or $x_{in}p_n$ and thus lies in $\<p_n\>$.
  For the other containment let
  \[f \in \<p_n\>:\left(\prod_{ij} x_{ij} \right)^\infty,\]
  that is $mf \in \<p_n\>$ for some monomial $m$ in~$S$.  This implies
  $mf = gp_n$ for some polynomial $g\in S$.  By
  Lemma~\ref{l:pnpnplus}, $x_{in}mf = g' p_{n,i}^+$ for some
  $g' \in S$.  So, $x_{in}mf \in \<p_{n,i}^+\>$ and thus
  $f\in \<p_{n,i}^+\>:x_{in}m$.  Lemma~\ref{l:colonContain} shows that
  $f \in \<p_{n,k} : k\in [n]\>$.
\end{proof}

\newcommand{\laur}{\kk[\ZZ^{c_{n+1}}]} Having identified the minimal
saturated fibers, the longest polynomials, and computed the saturation
with respect to the variables $x_{ij}$, we are now ready to prove
Theorem~\ref{thm:mysterious}.
\begin{proof}[Proof of Theorem~\ref{thm:mysterious}]
  After a potential renumbering, assume $\phi_1$ is the identity.  It
  suffices to prove the theorem for the omission of the Veronese ideal
  $i=1$ from the intersection.  The remaining cases follow by
  application of $\phi_l$ to the ambient ring.

  Consider the extensions $J_n\laur$ and $I_n\laur$ to the Laurent
  polynomial ring.  By the general Theorem~\ref{t:laurentIntersection}
  \[
    \bigcap_{i\neq 1} \phi_i(I_{n}\laur)=J_n\laur + \<p_n\>.
  \]  
  Pulling back to the polynomial ring, we have
  \[
    \bigcap_{i\neq 1} \phi_i(I_{n}) =(J_n + \<p_n\>) :
    (\prod_{x_{ij}}x_{ij})^\infty.\]
  Contingent on Theorem~\ref{t:laurentIntersection}, the result now
  follows from Proposition~\ref{p:saturate}.
\end{proof}

We have reduced the proof of Theorem~\ref{thm:mysterious} to a general
result on intersection in the Laurent ring.  It is a variation
of~\cite[Theorem~2.1]{eisenbud96:_binom_ideal}.  According to
\cite[Section~2]{eisenbud96:_binom_ideal}, any binomial ideal in the
Laurent polynomial ring $\kk[\ZZ]$ is defined by its lattice
$L\subset\ZZ^n$ of exponents and a partial character
$\rho : L \to \kk^*$.  Such an ideal is denoted $I(\rho)$ where the
lattice $L$ is part of the definition of~$\rho$.

\begin{thm}\label{t:laurentIntersection} 
  Let $\kk$ be a field such that $\charac(k)$ is either zero or does
  not divide the order of the torsion part of~$\ZZ^n/L$ and
  $I(\rho) \subset \kk[\ZZ^n]$ be binomial.  Let
  $I(\rho)=I(\rho'_1)\cap \ldots \cap I(\rho'_k)$ be a primary
  decomposition of $I(\rho)$ over the algebraic closure $\ol\kk$
  of~$\kk$.  Omitting one component $I(\rho'_{i^*})$ yields
  \[
    \bigcap_{i \neq i^*} I(\rho'_i)=I(\rho)+\rho'_{i^*}(p_L)
  \]
  where $p_{L}$ is the generating function of a fundamental zonotope
  of the lattice~$L$.
\end{thm}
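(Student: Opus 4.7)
The plan is to combine the Eisenbud--Sturmfels primary decomposition of $I(\rho)$ over $\ol\kk$ with the Chinese remainder theorem to reinterpret $\bigcap_{i\neq i^*} I(\rho'_i)$ as a single direct summand of $\kk[\ZZ^n]/I(\rho)$, and then exhibit $\rho'_{i^*}(p_L)$ as a generator of this summand via orthogonality of characters on the finite abelian group $G := \sat(L)/L$.

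The first step will be to set up the Chinese remainder decomposition. The components $\rho'_i$ are indexed by the $d := |G|$ extensions of $\rho$ to $\sat(L)$, which biject with the characters of $G$; here $d$ is the order of the torsion part of $\ZZ^n/L$. The ideals $I(\rho'_i)$ are pairwise comaximal because any two extensions $\rho'_i, \rho'_j$ disagree at some $v\in\sat(L)$, and then the nonzero scalar $\rho'_i(v)-\rho'_j(v)$ lies in $I(\rho'_i)+I(\rho'_j)$. Under the hypothesis $\charac(\kk)\nmid d$, CRT yields an isomorphism
\[
\kk[\ZZ^n]/I(\rho) \;\cong\; \prod_{i=1}^{d} \kk[\ZZ^n]/I(\rho'_i),
\]
and the image of $\bigcap_{i\neq i^*} I(\rho'_i)$ modulo $I(\rho)$ is the single direct summand at index $i^*$, a cyclic ideal of $\kk[\ZZ^n]/I(\rho)$ generated by the idempotent $e_{i^*}$.

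The second step is to identify $\rho'_{i^*}(p_L)$ as a lift of a unit multiple of $e_{i^*}$. After fixing a basis $\mathcal{B}$ of $L$, the $d$ integer points $v_0,\dots,v_{d-1}$ of the fundamental zonotope form a transversal of $G$ in $\sat(L)$, so
\[
p_L = \sum_{j=0}^{d-1} x^{v_j}, \qquad \rho'_{i^*}(p_L) = \sum_{j=0}^{d-1} \rho'_{i^*}(v_j)^{-1}\, x^{v_j}.
\]
Reducing modulo $I(\rho'_i)$ replaces each $x^{v_j}$ by the scalar $\rho'_i(v_j)$, yielding
\[
\rho'_{i^*}(p_L) \;\equiv\; \sum_{j=0}^{d-1} \bigl(\rho'_i/\rho'_{i^*}\bigr)(v_j) \pmod{I(\rho'_i)}.
\]
Since the ratio $\rho'_i/\rho'_{i^*}$ is trivial on $L$, it descends to a character of $G$, trivial precisely when $i=i^*$; orthogonality of characters on $G$ then evaluates the sum to $d$ when $i=i^*$ and to $0$ otherwise.

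This immediately yields $\supseteq$. For $\subseteq$, the element $\rho'_{i^*}(p_L)$ corresponds under the CRT isomorphism to the tuple that is $d$ in position $i^*$ and zero elsewhere; since $d$ is a unit in $\kk$, multiplication by $\rho'_{i^*}(p_L)$ in $\kk[\ZZ^n]/I(\rho)$ surjects onto the $i^*$-th summand, which suffices. The hardest part will be not a deep new difficulty but careful bookkeeping: setting up the CRT decomposition unambiguously over $\ol\kk$, checking that the integer points of the fundamental zonotope of any basis of $L$ form a transversal of $\sat(L)/L$ (so that $\rho'_{i^*}(p_L)$ is well-defined up to scalar and independent of the basis chosen), and tracking the two places where the characteristic hypothesis enters, namely once to make CRT produce a direct product and once to ensure the scalar $d$ is invertible.
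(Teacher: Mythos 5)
Your proof is correct, and it takes a genuinely different route from the paper's. The paper diagonalizes the inclusion $L\subset\ZZ^n$ via Smith normal form, reducing to $I(\rho)=\langle x_1^{q_1}-1,\dots\rangle$, and then handles the univariate factor $x^q-1$ directly, where the subintersection is the cyclotomic-type polynomial $(x^q-1)/(x-\zeta)$ and the multivariate case is assembled by multiplying factors. You instead work intrinsically: Eisenbud--Sturmfels gives the prime components indexed by the extensions of $\rho$ to $\sat(L)$, pairwise comaximality plus CRT identifies $\bigcap_{i\ne i^*}I(\rho'_i)$ with the $i^*$-th direct summand of $\ol\kk[\ZZ^n]/I(\rho)$, and orthogonality of characters of $G=\sat(L)/L$ exhibits $\rho'_{i^*}(p_L)$ as $d\cdot e_{i^*}$ (up to the unit $d$), since the transversal given by the fundamental zonotope converts the character sum into a sum over~$G$. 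What your approach buys: it is coordinate-free, makes the idempotent structure visible, and pins down exactly why the characteristic hypothesis is needed (invertibility of $d$ and separability of the CRT decomposition) --- all of which the paper's terse ``reduce to diagonal, multiply univariate cases'' glosses over. What the paper's approach buys: it is shorter to state and the univariate base case $(x^q-1)/(x-\zeta)=\sum_j\zeta^{-j}x^j$ is transparent, but it leaves implicit how $p_L$ transforms under the Smith coordinate change. One small point worth spelling out in your write-up: the ``lattice points of the fundamental zonotope'' should be taken in the half-open zonotope (or otherwise one fixed transversal of $\sat(L)/L$ contained in the closed zonotope) so that they number exactly $d$ and form a genuine transversal; this matches the paper's convention in Lemma~\ref{l:genZono}, where $p_n=\prod_{m\in M}(x^m+1)$ picks out exactly one representative per coset.
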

\begin{proof}
  A linear change of coordinates in $\ZZ^{n}$ corresponds to a
  multiplicative change of coordinates in $\kk[\ZZ^{n}]$.  Since the
  inclusion of $L\subset\ZZ^{n}$ can be diagonalized using the Smith
  normal form, one can reduce to the case that $I(\rho)$ is generated
  by binomials $x_i^{q_i} - 1$.  This case follows by multiplication
  of the results in the univariate case.  The univariate case, in
  turn, is up to scaling given by the polynomials $(x^n-1)/(x-1)$.
\end{proof}

The assumption on $\charac(k)$ in Theorem~\ref{t:laurentIntersection}
can be relaxed at the cost of a case distinction similar to that in
\cite[Theorem~2.1]{eisenbud96:_binom_ideal}.

The explicit form of $p_L$ depends on a choice of lattice
basis. Because the notions lattice basis ideal and lattice ideal are
not the same in the polynomial ring (they are in the Laurent
polynomial ring), one needs to pull back using colon ideals to get a
result in the polynomial ring.  Even if in the Laurent ring the
subintersection in Theorem~\ref{t:laurentIntersection} is principal
modulo $I(\rho)$, it need not be principal in the polynomial ring (as
visible in Theorem~\ref{thm:mysterious}).  It would be very nice to
find more effective methods for binomial subintersections in the
polynomial ring, but at the moment the following remark is all we
have.
\begin{remark}\label{r:satPoly}  Under the field assumptions in
  Theorem~\ref{t:laurentIntersection}, let $I \subset \kk[\NN^n]$ be a
  lattice ideal in a polynomial ring with indeterminates
  $x_1,\dots,x_n$.  There exists a partial character
  $\rho : L \to \kk^*$ such that $I = I(\rho) \cap \kk[\NN^n]$.  The
  intersection of all but one minimal primary component of $I$ is
  \[
    (I(\rho)+\rho(p_L)) \cap \kk[\NN^n] = (I + \rho(p)m) :
    (\prod_{i=1}^n x_i)^\infty.
  \]
  where $p_L$ is the generating function of a fundamental zonotope
  of~$L$, and $m$ is any monomial such that
  $\rho(p_L)m \in \kk[\NN^n]$.
\end{remark}

\section{Extensions}
\label{s:extensions}\enlargethispage{3ex}

The broadest possible generalization of the results in
Section~\ref{sec:principal-ideal} may start from an arbitrary toric
ideal~$I \subset \kk[\NN^n]$, corresponding to a grading
matrix~$V\in\NN^{d\times n}$, and a subideal $J\subset I$, for example
a lattice basis ideal.  One can then ask when the quotient
$\kk[\NN^n]/J$ is toral in the grading~$V$.  The techniques in
Section~\ref{sec:principal-ideal} depend heavily on this property and
the very controllable stabilization of the Hilbert function.  One can
get the feeling that this happens if $J\subset I$ is a lattice ideal
for some lattice that is of finite index in the saturated lattice
$\ker_\ZZ(V)$.  However, such a $J$ cannot always be found: by a
result of Cattani, Curran, and Dickenstein, there exist toric ideals
that do not contain a complete intersection of the same
dimension~\cite{cattani2007complete}.

A more direct generalization of the results of
Section~\ref{sec:principal-ideal} was suggested to us by Aldo Conca.
The $d$-th Veronese grading $V_{d,n}$ has as its columns all vectors
of length $n$ and weight~$d$.  The corresponding toric ideal is the
$d$-th Veronese ideal $I_{d,n} \subset S = \kk[\NN^N]$ and it contains
a natural complete intersection~$J_{d,n}$ defined as follows.  The set
of columns of $V_{d,n}$ includes the multiples of the unit vectors
$D := \{de_i, i=1,\dots,n\}$.  For any column $v \notin D$, let
$f_v = x_v^d - \prod_i x_{de_i}^{v_i}$.  Then
$J = \<f_v : v\notin D\> \subset I_{d,n}$ is a complete intersection
with $\codim (J_{d,n}) = \codim(I_{d,n})$.  It is natural to
conjecture that a statement similar to Proposition~\ref{p:isMesoprime}
is true.  In this case, however, the group $L/L'$
(cf.~Proposition~\ref{p:torsion}) has higher torsion.  This implies
that the binomial primary decomposition of $J$ exists only if $\kk$
has corresponding roots of unity.
By results of Goto and Watanabe \cite[Chapter~3]{goto1978graded} on
the canonical module (cf.~\cite[Exercise~3.6.21]{winfried1998cohen})
the ring $S/I$ is Gorenstein if and only if $d | n$, so that $J:I$ is
equal to $J + (p)$ for some polynomial $p$ exactly in this situation.

In Section~\ref{sec:principal-ideal}, the notation can be kept in
check because there is a nice representation of monomials as upper
triangular matrices (Proposition~\ref{p:isMesoprime},
Lemma~\ref{l:fiberparam}, etc.).  To manage the generalization, it
will be an important task to find a similarly nice representation.  It
is entirely possible that something akin to the string notation
of~\cite[Section~14]{sturmfels96:_gr_obner_bases_and_convex_polyt}
does the job.  Additionally, experimentation with
\textsc{Macaulay2}---which has informed the authors of this
paper---will be hard.  For example, for $d=3, n=4$, the group $L/L'$
from Proposition~\ref{p:torsion} is isomorphic to $(\ZZ/3\ZZ)^{13}$
which means that a prime decomposition of $J_{3,4}$ has 1594323
components.  Computing subintersections of it is out of reach.  It may
be possible to compute a colon ideal like $(J_{3,4}:I_{3,4})$
directly, but off-the-shelf methods failed for us.

\bibliographystyle{amsplain} 
\bibliography{math}

\end{document}